\newtheorem{theorem}{Theorem}[section]
\newtheorem{thm}[theorem]{Theorem}
\newtheorem{lem}[theorem]{Lemma}
\newtheorem{proposition}[theorem]{Proposition}
\newtheorem{corollary}[theorem]{Corollary}
\theoremstyle{definition}
\newtheorem{defn}[theorem]{Definition}
\theoremstyle{remark}
\newtheorem{remark}[theorem]{Remark}
\newtheorem{rem}[theorem]{Remark}
\numberwithin{equation}{section}
 \DeclareMathAlphabet{\mathpzc}{OT1}{pzc}{m}{it}
 \newcommand{\LLL}{\mathcal{L}}
\newcommand{\1}{\mathbbm{1}}
 \newcommand{\E}{\mathbb{E}}            
 \newcommand{\T}{\mathbb{T}}
 \newcommand{\e}{\varepsilon}
 \newcommand{\Ll}{\langle}
 \newcommand{\Rr}{\rangle}
 \newcommand{\N}{\mathbb{N}}
 \newcommand{\R}{\mathbb{R}}
 \newcommand{\Z}{\mathbb{Z}}
 \newcommand{\PP}{\mathbb{P}}
 \newcommand{\mcl}{\mathcal}
 \newcommand{\Be}{\begin{equation}}
 \newcommand{\Ee}{\end{equation}}
 \newcommand{\Bs}{\begin{split}}
 \newcommand{\Es}{\end{split}}
  \newcommand{\Bes}{\begin{equation*}}
 \newcommand{\Ees}{\end{equation*}}
 \newcommand{\BT}{\begin{thm}}
 \newcommand{\ET}{\end{thm}}
 \newcommand{\Bp}{\begin{proof}}
 \newcommand{\Ep}{\end{proof}}
 \newcommand{\BL}{\begin{lem}}
 \newcommand{\EL}{\end{lem}}
 \newcommand{\BP}{\begin{proposition}}
 \newcommand{\EP}{\end{proposition}}
 \newcommand{\BC}{\begin{corollary}}
 \newcommand{\EC}{\end{corollary}}
 \newcommand{\BR}{\begin{rem}}
 \newcommand{\ER}{\end{rem}}
 \newcommand{\BD}{\begin{defn}}
 \newcommand{\ED}{\end{defn}}
 \newcommand{\BI}{\begin{itemize}}
 \newcommand{\EI}{\end{itemize}}
 \newcommand{\eqn}{equation}
 \newcommand{\tl}{\tilde}
  \newcommand{\dif}{{\rm d}}
\def\PP{\mathbb P}
\def\RR{\mathbb{R}}
\def\Om{{\Omega}}
\def\<{\left<}\def\>{\right>}
\def\({\left(}\def\){\right)}
\begin{document}
\title
[\ \ \ \ Irreducibility of stochastic real Ginzburg-Landau equation]{Irreducibility of stochastic real Ginzburg-Landau equation driven by $\alpha$-stable noises and applications}

\author[R. Wang]{Ran Wang}
\address{School of Mathematical Sciences, University of Science and Technology of China, Hefei, China.} \email{wangran@ustc.edu.cn}

 \author[J. Xiong]{Jie Xiong }
\address{Department of Mathematics, Faculty of Science and Technology,  University of  Macau, Taipa, Macau.}
\email{jiexiong@umac.mo}

\author[L. Xu]{Lihu Xu}
\address{Department of Mathematics, Faculty of Science and Technology,  University of  Macau, Taipa, Macau.}
\email{lihuxu@umac.mo}

\maketitle
\begin{minipage}{140mm}
\begin{center}
{\bf Abstract}
\end{center}
We establish the irreducibility of stochastic real Ginzburg-Landau equation with  $\alpha$-stable noises by a maximal inequality
and solving a control problem.
As applications, we prove that the system converges to its equilibrium measure with exponential rate under a topology stronger than total variation and obeys the moderate deviation principle by constructing some Lyapunov test functions.
\end{minipage}

\vspace{4mm}

\medskip
\noindent
{\bf Keywords}: Stochastic real Ginzburg-Landau equation; $\alpha$-stable noises; Irreducibility;  Moderate deviation principle; Exponential ergodicity.

\medskip
\noindent
{\bf Mathematics Subject Classification (2000)}: \ {60F10, 60H15,  60J75}.


\section{Introduction}
Consider the stochastic real Ginzburg-Landau equation driven by $\alpha$-stable noises on torus $\mathbb T:=\mathbb R/\mathbb Z$ as follows:
\begin{\eqn} \label{e:MaiSPDE}
\dif X-\partial_{\xi}^2X\dif t-(X-X^3)\dif t= \dif L_t,
\end{\eqn}
where $X:[0,+\infty)\times \mathbb T\times\Om\rightarrow\mathbb R$ and $L_t$ is an   $\alpha$-stable noise.
It is known (\cite{Xu13}) that Eq. \eqref{e:MaiSPDE} admits a unique mild solution $X$ in the c\`adl\`ag space almost surely.
As $\alpha \in (3/2,2)$, $X$ is a Markov process with a unique invariant measure $\pi$ (see Section 2  below for  details).
By the uniqueness (see \cite{DPZ96}), $\pi$ is ergodic in the sense that
$$
\lim_{T\rightarrow \infty}\frac1T\int_0^T \Psi(X_t)\dif t=\int \Psi \dif \pi\ \ \ \ \mathbb P {\text-a.s.}
$$
for all initial state $x_0$ and all continuous and bounded functions $\Psi$.

\vskip0.3cm
The irreducibility is a fundamental concept in stochastic dynamic system, and plays
a crucial role in the research of ergodic theory.
See, for instance, the classical work \cite{Doob} and the books for stochastic infinite dimensional systems \cite{DPZ96, PeZa07}.

It is well known that one usually solves a control problem
to prove the irreducibility for stochastic partial differential equations (SPDEs) driven by Wiener noises, see \cite{Da, DPZ96}.
For SPDEs driven by $\alpha$-stable noises, when the system is linear or Lipschitz,
Priola and Zabczyk proved the irreducibility in the same line (\cite{PZ11}). However, due to the
discontinuity of trajectories and the lack of second moment, the control problem in \cite{PZ11} is much harder
than those in the Wiener noises case. To our knowledge, there seem no other literatures about the irreducibility of stochastic systems
with $\alpha$-stable noises.

In this paper, we prove that the system \eqref{e:MaiSPDE} is irreducible by following the spirit in
\cite{Da} and \cite{PZ11}. Due to the non-Lipschitz nonlinearity,
the control problem in our setting is much harder and a maximal inequality is needed.
\vskip0.3cm

The ergodicity of the system \eqref{e:MaiSPDE} has been proved  in \cite{Xu13} in the sense that $X$ converges to a unique invariant measure
under the weak topology,  but the convergence speed is not addressed.
In this paper, thanks to the irreducibility and the strong Feller property (established in \cite{Xu13}),
we prove that the system \eqref{e:MaiSPDE} converges to the invariant measure exponentially fast under a
topology stronger than total variation by constructing a Lyapunov test function.

Another application of our irreducibility result is to establish moderate deviation principle (MDP) of \eqref{e:MaiSPDE}.
Thanks to \cite{Wu01}, the MDP is obtained by verifying the same Lyapunov condition as above.

\vskip0.3cm
Finally we recall some literatures on the study of invariant measures and the long time behavior of stochastic systems driven by $\alpha$-stable type noises. \cite{PSXZ11, PXZ10} studied the exponential mixing for a family of semi-linear SPDEs with Lipschitz nonlinearity, while \cite{DXZ09} obtained the existence of invariant measures for 2D stochastic Navier-Stokes equations forced by $\alpha$-stable noises with $\alpha\in (1,2)$. \cite{Xu14} proved the exponential mixing for a family of 2D SDEs forced by degenerate $\alpha$-stable noises.
For the long term behaviour about stochastic system drive by L\'evy noises,
we refer to \cite{ Do08, DXi11, DXZ09, EH01, FuXi09, Mas} and the literatures therein.
\vskip0.3cm

The paper is organized as follows. In Section 2, we first give a brief review of some known results about the existence and uniqueness of solutions and invariant probability measures for  stochastic  Ginzburg-Landau equations. We will also present the main theorems in this section. In Section 3,  we prove that the system $X$ is  irreducible.   In the last section,  we first recall  some results about moderate deviations and exponential convergence for general strong Feller Markov processes, and then we prove moderate deviations and exponential convergence for $X$ by constructing appropriate Lyapunov test functions.

\section{Stochastic real Ginzburg-Landau equations}

Let $\T= \R/\Z$ be equipped with the usual Riemannian metric, and let $d \xi$
denote the Lebesgue measure on $\T$. For any $p\ge1$, let
$$
L^p(\T;\R):=\left\{x: \T\rightarrow\R; \|x\|_{L^p}:=\left(\int_\T |x(\xi)|^4 \dif\xi\right)^{\frac14}<\infty\right\}.
 $$
Then
$$H:=\bigg\{x\in L^2(\T; \R); \int_\T x(\xi) \dif\xi =0\bigg\}$$
is a separable real Hilbert space with inner product
$$\Ll x,y \Rr_H:=\int_\T x(\xi)y(\xi) \dif\xi,\ \ \ \ \ \forall \ x, y \in H.$$
For any $x\in H$, let
$$
\|x\|_H:=\|x\|_{L^2}=\left(\langle x,x\rangle_H\right)^{\frac12}.
$$
Let $\Z_*:=\Z \setminus \{0\}$. It is well known that
$$\left\{e_k; e_k=e^{i2 \pi k\xi}, \ k \in \Z_*\right\} $$
is an orthonormal basis of $H$. For each $x \in H$,
it can be represented by  Fourier series
$$x=\sum_{k \in \Z_*} x_k e_k  \ \ \ \ {\rm with} \ \ \ x_k \in \mathbb C, \ x_{-k}=\overline{x_k}.$$

Let $\Delta$ be the Laplace operator on $H$. It is well known that
$D(\Delta)=H^{2,2}(\T) \cap H$. In our setting, $\Delta$ can be determined by the following
relations: for all $k \in \Z_*$,
$$\Delta e_k=-\gamma_k e_k\ \ \ \ {\rm with} \ \ \gamma_k=4 \pi^2 |k|^2,$$
with
$$H^{2,2}(\T) \cap H=\left\{x \in H; \ x=\sum_{k \in \Z_*} x_k e_k, \ \sum_{k \in \Z_*} |\gamma_k|^{2} |x_k|^2<\infty\right\}.$$
Denote
$$A=-\Delta, \ \ \ \ D(A)=H^{2,2}(\T) \cap H.$$
Define the operator $A^{\sigma}$ with $\sigma \ge 0$ by
$$A^\sigma x=\sum_{k \in \Z_*} \gamma_k^{\sigma} x_ke_k, \ \ \ \ \ \ x \in D(A^\sigma),$$
where $\{x_k\}_{k \in \Z_*}$ are the Fourier coefficients of $x$, and
$$D(A^\sigma):=\left\{x \in H: \ x=\sum_{k \in \Z_*} x_k e_k, \sum_{k \in \Z_*} |\gamma_k|^{2 \sigma} |x_k|^2<\infty\right\}.$$
Given $x \in D(A^\sigma)$, its norm is
$$\|A^\sigma x\|_H:=\left(\sum_{k \in \Z_*} |\gamma_k|^{2 \sigma} |x_k|^2\right)^{1/2}.$$
Moreover, let
\begin{align}
V:=D(A^{1/2})\ \ \text{and }  \|x\|_V:=\|A^{1/2}x\|_H.
\end{align}
Notice that $V$ is densely and compactly embedded in $H$. \\

We shall study  $1$D stochastic Ginzburg-Landau equation on $\T$ as the following
\begin{equation} \label{e:XEqn}
\begin{cases}
\dif X_t + A X_t\dif t= N(X_t) \dif t + \dif L_t, \\
X_0=x_0,
\end{cases}
\end{equation}
where
\begin{itemize}
\item[(i)] the nonlinear term $N$ is defined by
\begin{equation*} \label{e:NonlinearB}
N(u)= u-u^3 , \ \ \ \ \ u \in H.
\end{equation*}
\item[(ii)] $L_t=\sum_{k \in \Z_*} \beta_k l_k(t) e_k$ is an $\alpha$-stable process on $H$ with
$\{l_k(t)\}_{k\in \Z_*}$ being i.i.d. 1-dimensional symmetric $\alpha$-stable process sequence with $\alpha>1$, see \cite{sato}. Moreover, we assume that there exist some $C_1, C_2>0$ so that $C_1 \gamma_k^{-\beta} \le |\beta_k| \le C_2 \gamma_k^{-\beta}$ with $\beta>\frac 12+\frac 1{2\alpha}$.
\end{itemize}

\vskip0.3cm

Let $C>0$ be a constant and let $C_p>0$ be a constant depending on the parameter $p$.
We shall often use the following inequalities (\cite{Xu13}):
\Be \label{e:eAEst}
\|A^{\sigma} e^{-At}\|_H \le C_\sigma t^{-\sigma}, \ \ \ \ \ \forall \ \sigma>0 \ \ \ \forall \ t>0;
\Ee
\Be \label{e:NVEst}
\|N(x)\|_V \leq C (\|x\|_V+\|x\|^3_V), \ \ \ \ \forall \ x \in V;
\Ee
\Be\label{e: NAH}
\|AN(x)\|_H\le C(1+\|x\|_V^2)(1+\|Ax\|_H^2);
\Ee
\Be \label{e:L4}
\|x\|^4_{L^4}\le \|x\|_V^2\|x\|_H^2, \ \ \ \ \  \forall  \ x\in V.
\Ee

\vskip0.3cm
Here we consider a general $E$-valued c\`adl\`ag  Markov process,
$$
(\Omega, \{\mathcal F^0_t\}_{t\ge0}, \mathcal F, \{X_t^x\}_{t\ge0, x\in E}, (\mathbb P_x)_{x\in E})
$$
whose transition probability is denoted by $\{P_t(x, dy)\}_{t\ge0}$, where $\Omega:=D( [0,+\infty); E)$ is the space of the c\`adl\`ag functions from $[0,+\infty)$ to $E$ equipped with the  Skorokhod topology, $\mathcal F^0_t=\sigma\{X_s, 0\le s\le  t\}$ is the natural filtration  .

For all $f\in b\mathcal B(E)$ (the space of all bounded measurable functions), define
$$
P_tf(x)=\int_E P_t(x, \dif y)f(y)  \ \ \ \text{for all } t\ge0, x\in E.
$$
For any $t>0$, $P_t$ is said to be {\it strong Feller} if $P_t\varphi\in C_b(E)$ for any $\varphi\in b\mathcal B(E)$; $P_t$ is {\it irreducible} in $E$ if $P_t1_O(x)>0$ for any $x\in E$ and any non-empty open subset $O$ of $E$.
\vskip0.3cm

\BD We say that a predictable $H$-valued stochastic process $X=(X_t^x)$ is a mild solution to Eq. \eqref{e:XEqn} if, for any $t\ge0, x\in H$, it holds ($\mathbb P$-a.s.):
\begin{equation}\label{e: mild solution}
X^x_t(\omega)=e^{-At} x+\int_0^t e^{-A(t-s)} N(X_s^x(\omega))\dif s+\int_0^t e^{-A(t-s)} \dif L_s(\omega).
\end{equation}
\ED

The following existence and uniqueness results for the solutions and the invariant measure can be found in \cite{Xu13}.
\begin{thm}[\cite{Xu13}] \label{Thm Xu 13} The following statements hold:
\begin{enumerate}
\item  If $\alpha \in (1,2)$ and $\beta>\frac 12+\frac{1}{2\alpha}$, for every $x \in H$ and $\omega \in \Omega$ a.s.,
Eq. \eqref{e:XEqn} admits a unique mild solution $X^x_{\cdot}(\omega) \in D([0,\infty);H) \cap D((0,\infty);V)$.

\item  $X=(X^x_t)_{t\ge0,x\in H}$ is a Markov process. If  $\alpha \in (3/2,2)$ and $\frac 12+\frac{1}{2\alpha}<\beta<\frac 32-\frac{1}{\alpha}$ are further assumed,  the transition probability $P_t$ of $X$ is strong Feller in $H$ for any $t>0$.
\item
 If $\alpha \in (3/2,2)$ and $\frac 12+\frac{1}{2\alpha}<\beta<\frac 32-\frac{1}{\alpha}$, $X$ admits a unique invariant measure, and the invariant measure is supported on $V$.
\end{enumerate}
\end{thm}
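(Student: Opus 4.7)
My plan is to handle the three claims in order, reducing throughout to a random nonlinear PDE by subtracting off the stochastic convolution $Z_t := \int_0^t e^{-A(t-s)}\,dL_s$. The first task is to control $Z$: using the self-similarity of the $l_k$ together with the bound $|\beta_k|\le C_2\gamma_k^{-\beta}$, one establishes that $Z_t\in D(A^\sigma)$ almost surely for any $\sigma$ with $\sigma<\beta-\tfrac{1}{2\alpha}$. The hypothesis $\beta>\tfrac{1}{2}+\tfrac{1}{2\alpha}$ precisely permits some choice $\sigma>\tfrac{1}{2}$, yielding $Z_t\in V$ a.s.; a maximal inequality of the kind used in the irreducibility proof later in the paper upgrades this to locally bounded $V$-valued trajectories.

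For part (1), I set $Y_t:=X_t-Z_t$, so that $Y$ must solve the random PDE $\partial_t Y+AY=N(Y+Z)$ with $Y_0=x_0$. I would construct $Y$ via Galerkin approximation and pass to the limit using energy estimates in $H$ and $V$. The crucial point is that the cubic nonlinearity is dissipative: testing with $Y$ produces the good term $\|Y\|_{L^4}^4$, which together with \eqref{e:L4} absorbs the cross terms arising from $Z$ once $Z\in V$. Uniqueness in $D([0,\infty);H)\cap D((0,\infty);V)$ then follows from a Gr\"onwall argument on the difference of two solutions via the one-sided monotonicity inequality $\langle N(u)-N(v),u-v\rangle_H\le \|u-v\|_H^2$ supplied by the cubic term. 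The Markov property is inherited from the independent-increment structure of $L$.

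Part (2), the strong Feller property, is the main obstacle. Following the Priola--Zabczyk strategy, I would first truncate $L$ to finitely many Fourier modes and $N$ outside a large ball in $V$, derive a Bismut--Elworthy--Li type derivative formula for the resulting semigroup, and pass to the limit using the a priori $V$-bound from part (1) together with \eqref{e:NVEst} and \eqref{e: NAH}. The extra upper bound $\beta<\tfrac{3}{2}-\tfrac{1}{\alpha}$ is what guarantees that the noise remains non-degenerate enough relative to the smoothing of $e^{-At}$ for the resulting gradient estimate to survive the limit; without it the derivative formula blows up at high frequency, and this is where I expect the most work.

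For part (3), existence of an invariant measure follows from Krylov--Bogolyubov: an It\^o-type energy inequality for $\|X_t\|_H^2$, using again the $\|X\|_{L^4}^4$ dissipation to dominate the $\alpha$-stable forcing, yields a uniform bound on $\tfrac{1}{T}\int_0^T \mathbb E\|X_t\|_V^\eta\,dt$ for some small $\eta\in(0,\alpha)$, giving tightness in $H$ via the compact embedding $V\hookrightarrow H$. That the invariant measure is supported on $V$ follows by applying the smoothing $X_t\in V$ for $t>0$ to a stationary solution. Uniqueness, the subtlest point, I would obtain via an asymptotic coupling or monotonicity argument exploiting the pathwise dissipativity of $A+N$, thereby avoiding any appeal to irreducibility (which is the subject of the present paper).
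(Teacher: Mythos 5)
You should first be aware that the paper offers no proof of Theorem \ref{Thm Xu 13}: it is imported wholesale from \cite{Xu13} and used as a black box, so there is no internal argument to measure your proposal against line by line. Judged against what the present paper does reveal of the method of \cite{Xu13}, your outline is the same in spirit: the decomposition $X=Y+Z$ through the stochastic convolution, the maximal inequality for $A^\theta Z$ (this is exactly Lemma \ref{l:ZEst}, quoted from \cite[Lemma 3.1]{Xu13}), and the pathwise treatment of the random PDE for $Y$ by energy estimates (the paper repeatedly invokes ``the argument in the proof of Lemma 4.2 in \cite{Xu13}'' for precisely this step, and the $\|Y\|_{L^4}^4$ dissipation combined with \eqref{e:L4} is also how Step 2 of the irreducibility proof absorbs the cross terms). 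Two substantive caveats. First, for part (2) a Bismut--Elworthy--Li formula in the Gaussian sense is not available for $\alpha$-stable forcing; the Priola--Zabczyk route you name rests instead on a gradient estimate for the Ornstein--Uhlenbeck semigroup of the truncated cylindrical stable process, and the window $\frac12+\frac1{2\alpha}<\beta<\frac32-\frac1\alpha$ is exactly what makes that estimate both finite and integrable in time; your sketch correctly locates the difficulty but defers all of it to ``the most work'', which is where the actual content of \cite[Theorem 6.1]{Xu13} lies. Second, your plan for uniqueness in part (3) by pathwise monotonicity does work here, but only for a reason you should make explicit: on the mean-zero space $H$ the smallest eigenvalue of $A$ is $\gamma_1=4\pi^2>1$, so $\langle -A(u-v)+N(u)-N(v),u-v\rangle_H\le (1-4\pi^2)\|u-v\|_H^2<0$ and two solutions driven by the same noise contract exponentially in $H$; without the zero-mean restriction the destabilizing linear term $+u$ is not dominated by the spectral gap and this argument fails, so the claim is not a generic consequence of the cubic's dissipativity. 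With those two points filled in, your sketch is a faithful, if compressed, reconstruction of the cited proof rather than a new route.
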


Our first main result is the following theorem about the irreducibility.

\begin{thm}\label{thm irred}
Assume that $\alpha \in (1,2)$ and $\beta>\frac 12+\frac{1}{2\alpha}$.
For any initial value $x\in H$, the  Markov process $X=\{X_t^x\}_{t\ge0,x\in H}$ to the equation \eqref{e:XEqn} is irreducible in $H$.
\end{thm}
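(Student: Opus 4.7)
The plan is to follow the control--theoretic scheme of \cite{DPZ96, PZ11}. Fix $x\in H$, a target $y\in H$, $\e>0$, and $T>0$; it suffices to show $\mathbb P(\|X_T^x-y\|_H<\e)>0$. I would construct a deterministic c\`adl\`ag control $\phi:[0,T]\to H$ such that, first, the mild solution $\tilde X$ of $\dif\tilde X+A\tilde X\,\dif t=N(\tilde X)\,\dif t+\dif\phi_t$ with $\tilde X_0=x$ satisfies $\|\tilde X_T-y\|_H<\e/2$, and second, a support (small--ball) estimate gives $\mathbb P\bigl(\sup_{t\le T}\|L_t-\phi_t\|_H<\delta\bigr)>0$ for every $\delta>0$. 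Combined with a stability estimate showing that $\|X_T-\tilde X_T\|_H$ is small on that event, this yields $\mathbb P(\|X_T^x-y\|_H<\e)>0$.

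\textbf{Small--ball estimate (the maximal inequality).} For the support statement, I would truncate to Fourier modes $|k|\le N$, on which the noise is a finite independent family of one--dimensional symmetric $\alpha$--stable processes, each of full topological support on $\R$. Approximate $\phi$ by a piecewise constant path on a fine time grid, bound the deviation from $\phi$ at the grid points by independence and one--dimensional small--ball estimates, and patch these point estimates into a supremum bound over $[0,T]$ by a L\'evy symmetrization inequality. The high--frequency tail $|k|>N$ is controlled uniformly on $[0,T]$ using $|\beta_k|\le C\gamma_k^{-\beta}$ with $\beta>\tfrac12+\tfrac1{2\alpha}$, together with a maximal inequality for $\alpha$--stable stochastic convolutions; this is the maximal inequality flagged in the introduction.

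\textbf{Solving the control problem.} Since $x$ may only lie in $H$, I would work in two phases. On $[0,T/2]$ take $\phi\equiv 0$ and let $\tilde X$ evolve by the deterministic Ginzburg--Landau equation; Theorem \ref{Thm Xu 13} guarantees $\tilde X_{T/2}\in V$. On $[T/2,T]$ approximate $y$ by some $y'\in V$ with $\|y-y'\|_H<\e/4$, prescribe a smooth $V$--valued path $\tilde X_t$ from $\tilde X_{T/2}$ to $y'$, and read off the control from
\[
\phi_t=\tilde X_t-x+\int_0^t A\tilde X_s\,\dif s-\int_0^t N(\tilde X_s)\,\dif s,
\]
which is well defined by the estimates \eqref{e:NVEst}--\eqref{e: NAH}.

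\textbf{Stability and main obstacle.} For the continuous--dependence step, I would decompose $X=Y+Z$ and $\tilde X=\tilde Y+\tilde Z$ with the stochastic convolution $Z_t=\int_0^t e^{-A(t-s)}\,\dif L_s$ and its controlled counterpart $\tilde Z_t=\int_0^t e^{-A(t-s)}\,\dif\phi_s$. Then $W:=Y-\tilde Y$ solves the pathwise PDE
\[
\p_t W+AW=N(Y+Z)-N(\tilde Y+\tilde Z),\qquad W_0=0.
\]
Testing against $W$ in $H$ and using the monotonicity $\Ll a^3-b^3,a-b\Rr_H\ge0$ with $a=Y+Z$ and $b=\tilde Y+\tilde Z$, the non--Lipschitz cubic is absorbed into the dissipative term $\|W\|_V^2$, leaving a remainder that Sobolev interpolation \eqref{e:L4} bounds by $\sup_{t\le T}\|Z_t-\tilde Z_t\|$ in a suitable $L^p(\T)$ norm; a deterministic integration by parts in the convolution then converts this into $\sup_{t\le T}\|L_t-\phi_t\|_H$, which is small on the event from the small--ball step. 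The main obstacle is precisely this stability estimate: the cubic, non--Lipschitz $N$ and the heavy tails of $L$ preclude the Lipschitz/Gronwall scheme of \cite{PZ11}, so one must combine dissipativity, \eqref{e:L4}, and the maximal inequality carefully to keep the random constants almost surely finite on $[0,T]$.
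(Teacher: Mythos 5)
Your overall architecture does match the paper's: reduce to a control problem solved by a ``regularize with zero control, then interpolate'' construction, prove a support theorem for the noise by splitting into finitely many low Fourier modes (handled as in \cite{PZ11}) plus a high-frequency tail killed by the maximal inequality of Lemma \ref{l:ZEst}, and close with an energy estimate exploiting the sign structure of the cubic. But there is a genuine gap at the interface between your stability estimate and your small-ball estimate: they are formulated in incompatible topologies. The cubic forces you to control $\|Z_s-\tilde Z_s\|_{L^4}$, which via \eqref{e:L4} means the $V$-norm of the difference of the stochastic convolutions; the paper's Gronwall output \eqref{e: Gron} is exactly $\|Y_T-y(T)\|_H^2\le C_T\sum_{i\in\Lambda}\int_0^T\|Z_s-z(s)\|_V^i\,\dif s$. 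You propose to reduce this to $\sup_{t\le T}\|L_t-\phi_t\|_H$ by ``a deterministic integration by parts in the convolution'', but that step loses derivatives: writing $Z_t-\tilde Z_t=(L_t-\phi_t)-\int_0^tAe^{-A(t-s)}(L_s-\phi_s)\,\dif s$ and taking the $V$-norm produces a factor $\int_0^t\|A^{3/2}e^{-A(t-s)}\|\,\dif s\sim\int_0^t(t-s)^{-3/2}\,\dif s=\infty$, plus an uncontrolled boundary term $\|L_t-\phi_t\|_V$. One cannot bound a $V$-norm of the convolution difference by an $H$-norm supremum of the driving paths. The paper avoids this by proving the support theorem directly for $(\{Z_t\}_{0\le t\le T},Z_T)$ in $L^p([0,T];V)\times V$ (Lemma \ref{l:SupZt}), which is feasible precisely because $\beta-\frac{1}{2\alpha}>\frac12$, so Lemma \ref{l:ZEst} reaches the $D(A^\theta)$ scale with $\theta>\frac12$ and the high-mode tail is uniformly small in $V$. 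You should state and prove your small-ball step for $Z$ in this stronger topology, not for $L$ in $H$.

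A second, smaller gap: the monotonicity $\Ll a^3-b^3,a-b\Rr_H\ge0$ with $a=X$, $b=\tilde X$ does not directly absorb the cubic, because the energy identity tests $X^3-\tilde X^3$ against $W=Y-\tilde Y$ rather than against $a-b=W+(Z-\tilde Z)$; the correction term $\Ll Z-\tilde Z,\,X^3-\tilde X^3\Rr_H$ involves the cube of the solution itself and is not obviously controlled. The paper instead expands $X^3-x^3$ in powers of $\Delta Y=Y-y$ and $\Delta Z=Z-z$, keeps the good term $\frac14\|\Delta Y\|_{L^4}^4$ coming from the pointwise inequality $\frac34(\Delta Y)^4+3(\Delta Y)^3x+3(\Delta Y)^2x^2\ge0$, and handles the cross terms by Young and H\"older against $\sup_s\|x(s)\|_{L^4}$, which is finite because the controlled trajectory lies in $C([0,T];V)$ once the initial datum is reduced to $V$ by the Markov property (a cleaner reduction than your two-phase regularization of the controlled path alone). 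Your sketch correctly identifies this as the main obstacle but does not supply the decomposition that resolves it.
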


\begin{remark}\label{rmk} By the well-known Doob's Theorem (see \cite{DPZ96}), the strong Feller property and the irreducibility imply that  $X$ admits at most one unique invariant probability measure. This   gives another proof to the uniqueness of invariant measure.
 \end{remark}
\vskip0.3cm

As an application of our irreducibility result (together with strong Feller property), we have the following exponential ergodicity under a topology stronger than total variation.
Recall that in  \cite{Xu13} by ergodicity we mean that  $X$ has a unique invariant measure
under the weak topology. Theorem \ref{thm main 1} below gives not only an ergodic theorem in stronger sense but also exponential convergence speed.

\begin{thm}\label{thm main 1}
Assume that $\alpha \in (3/2,2)$ and $\frac 12+\frac{1}{2\alpha}<\beta<\frac 32-\frac{1}{\alpha}$. Let $\pi$ be the unique invariant probability  measure of $X$. Then there exist some positive constants $M>1, \rho\in(0,1), \theta>0$ satisfying that  $\int \Psi\dif \pi<+\infty$, where $\Psi(x):= (M+\|x\|_H^2)^{1/2}$,
  and $\pi$ is exponentially ergodic in the sense that
\begin{equation}\label{thm exp}
\sup_{|f|\le \Psi}\left|P_tf(x)-\int f\dif \pi \right|\le \theta \Psi(x)\cdot \rho^t\ \ \  \forall x\in H,  t\ge0.
\end{equation}
 \end{thm}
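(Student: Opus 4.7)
My plan is to reduce \eqref{thm exp} to a Foster--Lyapunov drift inequality via a Harris/Meyn--Tweedie type theorem for continuous-time strong Feller Markov processes, as catalogued in \cite{Wu01}. Concretely, since $P_t$ is strong Feller (Theorem \ref{Thm Xu 13}) and irreducible on $H$ (Theorem \ref{thm irred}), it suffices to produce a measurable function $\Psi:H\to[1,\infty)$, positive constants $c_1,c_2$ and a compact set $K\subset H$ so that the extended generator $\mathcal{L}$ of $X$ satisfies
\[
\mathcal{L}\Psi(x)\le -c_1\,\Psi(x)+c_2\,\mathbf{1}_K(x).
\]
This drift inequality both yields $\int\Psi\,\dif\pi<\infty$ by invariance (via a standard truncation argument) and, combined with the minorization provided by the strong Feller $+$ irreducibility pair, upgrades to the exponential convergence \eqref{thm exp} in the $\Psi$-weighted supremum norm.

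\textbf{Lyapunov function and local drift.} Take $\Psi(x)=(M+\|x\|_H^2)^{1/2}$, so that $D\Psi(x)=x/\Psi(x)$. Writing the generator as the sum of a local drift part and a non-local jump part $\J$, It\^o's formula applied to the mild solution gives
\[
\mathcal{L}\Psi(x)=\frac{-\|x\|_V^2+\|x\|_H^2-\|x\|_{L^4}^4}{\Psi(x)}+\J\Psi(x).
\]
The local part is strongly coercive: H\"older on $\T$ (with $|\T|=1$) yields $\|x\|_H\le \|x\|_{L^4}$, and since $\Psi(x)\asymp 1\vee\|x\|_H$ the term $-\|x\|_{L^4}^4/\Psi(x)$ dominates for large $\|x\|_H$, giving a bound of order $-c_0\,\Psi(x)^3$ outside a ball in $H$.

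\textbf{Jump term (the main obstacle) and compact exceptional set.} The step I expect to be the most delicate is the control of
\[
\J\Psi(x)=\int_H\bigl[\Psi(x+y)-\Psi(x)-\Ll D\Psi(x),y\Rr_H\,\mathbf{1}_{\|y\|_H\le 1}\bigr]\,\nu(\dif y),
\]
with $\nu$ the L\'evy measure of $L$. The small-jump part is estimated using $\|D^2\Psi\|\lesssim 1/\Psi$, which produces an upper bound of order $\int_{\|y\|_H\le 1}\|y\|_H^2\,\nu(\dif y)$; this is finite thanks to $|\beta_k|\le C_2\gamma_k^{-\beta}$ with $\beta>\frac12+\frac1{2\alpha}$. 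For the large jumps one uses the Lipschitz bound $|\Psi(x+y)-\Psi(x)|\le \|y\|_H$ together with $\int_{\|y\|_H>1}\|y\|_H\,\nu(\dif y)<\infty$, which holds because $\alpha>3/2>1$ combined with the same summability on $\beta_k$ forces $L_t$ to possess a finite first moment in $H$. Consequently $\J\Psi$ is uniformly bounded, and one obtains $\mathcal{L}\Psi(x)\le -c_1\Psi(x)+c_2$ on all of $H$. To replace the bounded exceptional set by a compact one (as required to apply the abstract theorem), I would either add a small multiple of $\|x\|_V^{2}$ to $\Psi$, exploiting the smoothing $X_t\in V$ for $t>0$ from Theorem \ref{Thm Xu 13} and the compact embedding $V\hookrightarrow H$, or else apply the drift criterion to a skeleton chain $X_{nt_0}$ for which the same smoothing effect makes bounded sets in $H$ effectively pre-compact after one time step.
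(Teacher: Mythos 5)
Your overall strategy is the paper's: reduce \eqref{thm exp} to the Lyapunov drift condition \eqref{Lyapunov} of Theorem \ref{thm DWT} (from \cite{Wu01}) using strong Feller plus irreducibility, with the same test function $\Psi(x)=(M+\|x\|_H^2)^{1/2}$, and your treatment of the jump part (second-derivative bound for small jumps, Lipschitz bound for large jumps, finiteness from the decay of $\beta_k$) is exactly the paper's estimates \eqref{eq: I3}--\eqref{eq: I4}. However, there is a genuine gap at the step you yourself identify as delicate: producing a \emph{compact}, not merely bounded, exceptional set. Your drift bound relies on the $-\|x\|_{L^4}^4/\Psi(x)$ term and therefore only controls $-\mathcal{L}\Psi/\Psi$ outside an $H$-ball, and neither of your proposed repairs works as stated. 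Adding a small multiple of $\|x\|_V^2$ to $\Psi$ destroys the large-jump estimate: for a functional of quadratic growth the integral $\int[\Psi(x+y)-\Psi(x)]\nu(\dif y)$ over large jumps involves a second moment of the $\alpha$-stable jumps, which is infinite for $\alpha<2$ --- this is precisely why the square root appears in $\Psi$ in the first place. Passing to a skeleton chain leaves the framework of Theorem \ref{thm DWT} and would require a separate, unproven drift estimate for $P_{t_0}\Psi$. The paper's resolution is already contained in the term you computed and then set aside: keeping $-\|x\|_V^2/\Psi(x)$ in the drift gives, for $M$ large, $-\mathcal{L}\Psi(x)/\Psi(x)\ge \|x\|_V^2/(M+\|x\|_H^2)-\tfrac14$ (see \eqref{eq: II}--\eqref{eq: I5}); taking $K=\{x\in V:\|x\|_V^2\le M\}$, which is compact in $H$ by the compact embedding $V\hookrightarrow H$, the Poincar\'e inequality $\|x\|_V^2\ge 4\pi^2\|x\|_H^2$ yields $\|x\|_V^2\ge\tfrac12(M+\|x\|_H^2)$ on $V\cap K^c$, hence $-\mathcal{L}\Psi/\Psi\ge\tfrac14$ there, while the quotient is $+\infty$ on $H\setminus V$. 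This gives \eqref{Lyapunov} with a genuinely compact $K$ satisfying $\pi(K)>0$ (the invariant measure is supported on $V$), which is also what the paper uses to verify aperiodicity --- a hypothesis of Theorem \ref{thm DWT} that your sketch does not address.

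A secondary gap: you apply It\^o's formula to $\Psi(X_t)$ directly on the mild solution. Since $X_t$ is not in $D(A)$, both the validity of the expansion and the assertion $\Psi\in\mathbf{D}_e(\mathcal{L})$ require justification; the paper devotes Lemma \ref{l:DeL} to this, via Galerkin approximation and passage to the limit in each term. This is technical but cannot be omitted, since the whole argument rests on $\mathcal{L}\Psi$ being a genuine extended generator.
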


\begin{remark}
  Let $(B_{\Psi}, \|\cdot\|_{\Psi})$ be the Banach space of all real measurable functions $f$ on $H$ such that
$$
\|f\|_{\Psi}:=\sup_{x\in H}\frac{|f(x)|}{\Psi(x)}<+\infty.
$$
The exponential convergence \eqref{thm exp} means that
$$
\|(P_t-\pi)(f)\|_{\Psi}\le \theta\|f\|_{\Psi}\cdot\rho^t,
$$
i.e., $P_t$ has  a spectral gap near its largest eigenvalue $1$ in $B_{\Psi}$.
\end{remark}
\vskip0.3cm

Let $\mathcal M_b(H)$ be the space of signed $\sigma$-additive measures of bounded variation on $H$ equipped with the Borel $\sigma$-field $\mathcal B (H)$.
On  $\mathcal M_b(H)$, we   consider the topology $\sigma(\mathcal M_b(H), b\mathcal B(H))$,  the so called $\tau$-topology  of convergence against  all bounded Borel functions  which is   stronger than the usual weak convergence topology $\sigma(\mathcal M_b(H), C_b(H))$, see \cite[Section 6.2]{DZ}.

Let
$$
\mathfrak{L}_t(A):=\frac1t\int_0^t\delta_{X_s}(A)\dif s \ \ \  \ \text{ for any measurable set } A,
$$
where $\delta_a$ is the Dirac measure at $a$.  According  to Corollary 2.5 in \cite{Wu01}, the system $X$ has the following exponential ergodicity.
\begin{corollary} Under the conditions of Theorem \ref{thm main 1}, the following results hold:
\begin{itemize}
  \item[(a)] $\mathfrak{L}_t$ converges to $\pi$ with an exponential rate w.r.t. the $\tau$-topology. More precisely for any neighborhood $\mathcal N(\pi)$ of $\pi$ in $(\mathcal M_b(H),\tau)$,
      $$
      \sup_{K\subset\subset H}\limsup_{t\rightarrow+\infty}\frac1t\log\sup_{x\in K}\mathbb P_x(\mathfrak{L}_t \notin \mathcal N(\pi))<0.
        $$
        Here $K\subset\subset H$ means that $K$ is a compact set in $H$.
  \item[(b)] The process $X$ is exponentially recurrent in the sense below: for any compact $K$ in $H$ with  $\pi(K)>0$, there exists some $\lambda_0>0$ such that for any compact $K'$ in $H$,
      $$
      \sup_{x\in K'}\mathbb E_x \exp(\lambda_0 \tau_K(T))<+\infty, \
      $$
      where $\tau_K(T)=\inf\{t\ge T; X_t\in K \}$ for any $T>0$.

   \end{itemize}
\end{corollary}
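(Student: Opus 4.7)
The plan is to apply \cite[Corollary 2.5]{Wu01} directly, as indicated in the statement. That result takes as input a Markov process that is strong Feller, irreducible, and geometrically ergodic in a $\Psi$-weighted total variation norm, and delivers precisely the two conclusions (a) and (b). The whole task is therefore to verify that the hypotheses of Wu's corollary have been assembled by the preceding theorems and to read off the conclusions.

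First I would collect the ingredients. By Theorem \ref{Thm Xu 13}(2), the semigroup $(P_t)$ is strong Feller on $H$; by Theorem \ref{thm irred}, it is irreducible in $H$; by Theorem \ref{Thm Xu 13}(3), it admits the unique invariant probability measure $\pi$. The Lyapunov weight $\Psi(x)=(M+\|x\|_H^2)^{1/2}$ supplied by Theorem \ref{thm main 1} is continuous on $H$, satisfies $\Psi\ge\sqrt{M}>1$, and has $\pi(\Psi)<+\infty$; together with the spectral gap estimate \eqref{thm exp}, this gives the geometric ergodicity of $(P_t)$ on the Banach space $B_\Psi$ required by Wu. The combination of strong Feller plus irreducibility supplies the local minorisation/petite-set condition used implicitly in Wu's framework, so no further compactness property of the sublevel sets of $\Psi$ is needed.

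Second, I would read off the conclusions. For (a), Wu's corollary upgrades the $B_\Psi$ spectral gap to an exponential concentration of $\mathfrak{L}_t$ around $\pi$ in the $\tau$-topology, uniformly over initial data in compacta $K\subset\subset H$; this is literally the content of \cite[Corollary 2.5]{Wu01}(a). For (b), given $\pi(K)>0$, the exponential concentration of $\mathfrak{L}_t(K)$ toward $\pi(K)>0$ implies that the process visits $K$ in any sufficiently long time window with probability $1-O(e^{-\lambda t})$ uniformly in $x$ from compact sets $K'$; a standard strong Markov argument, also packaged in Wu's corollary, then upgrades this to the uniform exponential moment $\mathbb{E}_x\exp(\lambda_0\tau_K(T))<+\infty$.

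The proof is therefore a matching exercise: one pairs each hypothesis of \cite[Corollary 2.5]{Wu01} with the corresponding item from Theorems \ref{Thm Xu 13}, \ref{thm irred} and \ref{thm main 1}. No genuine obstacle is expected; the only mild care needed is to check that our weight $\Psi$, which grows like $\|x\|_H$ at infinity, meets the precise normalization used in Wu (namely $\Psi\ge 1$ with $\pi(\Psi)<+\infty$ and the $B_\Psi$ contraction), and this is immediate from the choice $M>1$ and Theorem \ref{thm main 1}.
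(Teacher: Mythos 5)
Your proposal matches the paper's treatment: the corollary is obtained by directly invoking \cite[Corollary 2.5]{Wu01}, whose hypotheses (strong Feller, irreducibility, aperiodicity, and the Lyapunov drift condition \eqref{Lyapunov} with the compact set $K=\{x\in V:\|x\|_V^2\le M\}$) are exactly what is assembled in Theorems \ref{Thm Xu 13}, \ref{thm irred} and in the proof of Theorem \ref{thm main 1}. The only cosmetic difference is that you feed Wu's corollary the derived spectral-gap estimate \eqref{thm exp} rather than the underlying drift condition; since the paper verifies the drift condition en route to \eqref{thm exp}, both inputs are available and the argument is the same.
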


 \vskip0.3cm

Another application of our irreducibility result (together with strong Feller property) is to establish MDP for
the system \eqref{e:MaiSPDE}. To this end, let us first briefly recall MDP as follows.

Let $b(t):\R^+\rightarrow (0,+\infty)$ be an increasing function verifying
\begin{equation}\label{eq: scale}
\lim_{t\rightarrow \infty}b(t)=+\infty, \ \ \ \lim_{t\rightarrow \infty}\frac{b(t)}{\sqrt t}=0,
\end{equation}
define
\begin{equation}\label{eq: M measure}\mathfrak{M}_t:=\frac{1}{b(t)\sqrt t}\int_0^t(\delta_{X_s}-\pi)\dif s.\end{equation}
Then {\it moderate deviations} of $\mathfrak L_t$ from its {\it asymptotic limit} $\pi$ is to estimate
\begin{equation}\label{eq: moderate}
\PP_{\mu}\left(\mathfrak M_t \in A\right),
\end{equation}
where $A$ is some measurable set in  $(\mathcal M_b(H), \tau)$, a given domain of deviation. Here $\PP_{\mu}$ is the probability measure of the system $X$ with initial measure $\mu$.
When $b(t)=1$, this becomes an estimation of the central limit theorem; and when $b(t)=\sqrt t$, it is exactly the large deviations.
$b(t)$ satisfying \eqref{eq: scale} is between those two scalings, called {\it scaling of moderate deviations}, see \cite{DZ}.

Now we are at the position to state our MDP result:
\begin{thm}\label{thm main 2}
In the context of Theorem \ref{thm main 1}, for any initial measure $\mu$ verifying $\mu(\Psi)<+\infty$, the measure $\PP_{\mu}(\mathfrak M_t\in\cdot)$ satisfies the large deviation principle w.r.t. the $\tau$-topology with speed $b^2(t)$ and the rate function
 \begin{equation}
 I(\nu):=\sup\left\{\int f\dif \nu-\frac12\sigma^2(f);f\in b\mathcal B(H) \right\}, \ \  \ \forall \nu\in M_b(H)
 \end{equation}
where
\begin{equation}
\sigma^2(f)=\lim_{t\rightarrow \infty}\frac1t\E^{\pi}\left(\int_0^t (f(X_s)-\pi(f))\dif s \right)^2
\end{equation}
exists in $\RR$ for every $f\in B_{\Psi}\supset b\mathcal B(H)$. More precisely, the following three properties hold:
\begin{itemize}
  \item[(a1)] for any $a\ge0$, $\{\nu\in \mathcal M_b(H); I(\nu)\le a \}$ is compact in  $(\mathcal M_b(H),\tau)$;
  \item[(a2)]$($the upper bound$)$ for any  closed set $F$ in $(\mathcal M_b(H), \tau)$,
   $$
   \limsup_{T\rightarrow \infty}\frac1{b^2(T)}\log \mathbb P_{\mu}(\mathfrak M_T\in F)\le -\inf_F I;
   $$
    \item[(a3)] $($the lower bound$)$ for any open set $G$ in $(\mathcal M_b(H), \tau)$,
   $$
   \liminf_{T\rightarrow \infty}\frac1{b^2(T)}\log \mathbb P_{\mu}(\mathfrak M_T\in G)\ge -\inf_G I.
   $$
\end{itemize}

\end{thm}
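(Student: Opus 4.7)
The plan is to apply the general moderate deviation principle of Wu \cite{Wu01} for strong Feller, irreducible Markov processes satisfying a Lyapunov condition. The strong Feller property is provided by part (2) of Theorem \ref{Thm Xu 13}, and topological irreducibility has just been established in Theorem \ref{thm irred}, so the only remaining ingredient is a Lyapunov test function $U\ge 1$ satisfying
\begin{equation*}
P_t U(x) \le \rho^t U(x) + C
\end{equation*}
for some $\rho\in(0,1)$ and constant $C>0$. This is exactly the ingredient already needed in the proof of Theorem \ref{thm main 1}, and Wu's framework then automatically delivers the compactness of level sets (a1), the upper bound (a2), the lower bound (a3), the existence of $\sigma^2(f)$ for every $f\in B_{\Psi}$, and the variational representation of the rate function $I$.

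For the Lyapunov function I would take $U(x)=\Psi(x)=(M+\|x\|_H^2)^{1/2}$ with $M$ sufficiently large. The verification rests on an energy estimate for the mild solution. Applying an It\^o-type formula to $\|X_t\|_H^2$ and using the dissipative identity
\begin{equation*}
\langle -AX+N(X),X\rangle_H = -\|X\|_V^2+\|X\|_H^2-\|X\|_{L^4}^4,
\end{equation*}
together with the Poincar\'e inequality $\|X\|_V\ge 2\pi\|X\|_H$ on the zero-mean torus, the linear dissipation alone already beats the linearly unstable term $+\|X\|_H^2$, while the cubic term $-\|X\|_{L^4}^4$ provides additional super-linear dissipation. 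This gives exponential decay of $\E\|X_t^x\|_H^2$ modulo an additive constant produced by the noise, and taking the square root converts the bound into $P_t\Psi(x)\le\rho^t\Psi(x)+C$. The integrability $\int\Psi\dif\pi<+\infty$ then follows by a standard Krylov--Bogoliubov argument combined with Fatou's lemma.

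The main obstacle is the infinite second moment of the driving $\alpha$-stable noise: a naive It\^o computation on $\|X_t\|_H^2$ produces a jump term whose expectation is infinite. I would circumvent this by working directly with the concave function $\Psi=\sqrt{M+\|\cdot\|_H^2}$, whose Hessian decays fast enough that the jump contribution in It\^o's formula can be controlled by moments of order $p<\alpha$ of the $\alpha$-stable increments, which are finite. The maximal-type estimate for the $\alpha$-stable stochastic convolution $\int_0^t e^{-A(t-s)}\dif L_s$ already developed in Section 3 for the irreducibility proof then closes the estimate. Once $P_t\Psi\le\rho^t\Psi+C$ is in hand, the initial-data hypothesis $\mu(\Psi)<+\infty$ enters precisely as required to transfer the MDP from $\PP_{\pi}$ to $\PP_{\mu}$, completing the proof.
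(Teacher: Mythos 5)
Your overall strategy coincides with the paper's: the proof of Theorem \ref{thm main 2} there is carried out jointly with that of Theorem \ref{thm main 1} by reducing everything to Wu's general results (Theorems \ref{thm DWT} and \ref{thm Wu}), i.e.\ by verifying strong Feller, irreducibility, aperiodicity and a Lyapunov condition for the very same test function $\Psi(x)=(M+\|x\|_H^2)^{1/2}$; and your idea of using the concavity of the square root to tame the jump part of the generator is exactly what the paper does (small jumps via the Hessian bound $|\langle h,D^2\Psi(x)h\rangle|\le \|h\|_H^2/\sqrt{M+\|x\|_H^2}$ and $\int_{|y|\le1}y^2\nu(\dif y)<\infty$; large jumps via the Lipschitz bound $|\Psi(x+\beta_iy e_i)-\Psi(x)|\le|\beta_i y|$ and $\int_{|y|>1}|y|\nu(\dif y)<\infty$, together with $\sum_i\beta_i^2<\infty$ and $\sum_i|\beta_i|<\infty$).

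There is, however, a genuine gap in the form of the Lyapunov condition you propose to verify. Wu's hypothesis as used here, condition \eqref{Lyapunov}, reads $-\LLL\Psi/\Psi\ge\e\1_{K^c}-C\1_{K}$ for a set $K$ that is \emph{compact in $H$}; compactness is what makes $K$ petite (via strong Feller plus irreducibility) and cannot be dropped. Your plan only delivers the semigroup bound $P_t\Psi\le\rho^t\Psi+C$, i.e.\ a drift towards sublevel sets of $\Psi$, which are balls in the infinite-dimensional space $H$ and hence \emph{not} compact; nothing in your argument shows such balls are petite. You actually have the missing ingredient in hand: the term $\langle -Ax,D\Psi(x)\rangle=-\|x\|_V^2/\Psi(x)$ provides dissipation in the stronger $V$-norm, but you discard it by applying the Poincar\'e inequality immediately. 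The paper keeps it and takes $K=\{x\in V:\|x\|_V^2\le M\}$, which is compact in $H$ by the compact embedding $V\hookrightarrow H$ and satisfies $\pi(K)>0$ because $\pi$ is supported on $V$ --- the latter fact also being how the paper checks aperiodicity, a hypothesis of Wu's theorem that your proposal does not address. Two smaller remarks: the ``It\^o-type formula'' for the mild solution is not free (the paper devotes Lemma \ref{l:DeL} and a Galerkin approximation to proving $\Psi\in\mathbf{D}_e(\LLL)$), and once Theorem \ref{thm DWT} applies, the integrability $\int\Psi\dif\pi<\infty$ is part of its conclusion, so no separate Krylov--Bogoliubov argument is needed.
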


\section{ Irreducibility in $H$  } \label{section irre}

In this section, we shall prove that $X=\{X_t^x\}_{t\ge0,x\in H}$ in the system \eqref{e:XEqn} is irreducible in $H$. Together with the strong Feller property established in \cite[Theorem 6.1]{Xu13}, this gives another proof to the existence  of at most one invariant measure by classical Doob's Theorem.

\vskip0.3cm



\subsection{Irreducibility of stochastic convolution}
Let us first consider the following  Ornstein-Uhlenbeck process:
\Be\label{e:OUAlp}
\dif Z_t+A Z_t \dif t= \dif L_t, \ \ \ Z_0=0,
\Ee
where $L_t=\sum_{k \in \Z_*} \beta_k l_k(t) e_k$ is an   $\alpha$-stable process on $H$. It is well known that
$$
Z_t=\int_0^t e^{-A(t-s)} \dif L_s=\sum_{k \in \Z_{*}} z_{k}(t) e_k,
$$
where $$z_{k}(t)=\int_0^t e^{-\gamma_k(t-s)}
\beta_k \dif l_k(s).$$
\vskip0.3cm
The following maximal inequality can be found  in \cite[Lemma 3.1]{Xu13}.
\begin{lem} \label{l:ZEst}  For  any $T>0, 0 \leq \theta<\beta-\frac 1{2 \alpha}$ and all $0<p<\alpha$, we have
$$
\E \sup_{0 \leq t \le T}\|A^\theta Z_t\|^p_{H} \le CT^{p/\alpha},
$$
where $C$ depends on $\alpha,\theta, \beta, p$.
\end{lem}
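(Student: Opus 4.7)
The plan is to reduce the estimate to a one-dimensional OU problem via the Fourier decomposition of $L_t$, derive a pathwise comparison of $\sup_{t\le T}|z_k(t)|$ with $\sup_{t\le T}|l_k(t)|$ for each mode, and then sum over $k$ using the decay $|\beta_k|\le C_2\gamma_k^{-\beta}$.

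First I would observe that each coordinate $z_k(t)=\int_0^t e^{-\gamma_k(t-s)}\beta_k\,dl_k(s)$ solves the scalar linear SDE $dz_k=-\gamma_k z_k\,dt+\beta_k\,dl_k$. A pathwise integration by parts, applied to the c\`adl\`ag process $l_k$ against the smooth kernel $e^{-\gamma_k(t-\cdot)}$, gives
\[
z_k(t)=\beta_k l_k(t)-\gamma_k\int_0^t e^{-\gamma_k(t-s)}\beta_k l_k(s)\,ds,
\]
and then the trivial bound $\gamma_k\int_0^t e^{-\gamma_k(t-s)}\,ds\le 1$ yields the clean estimate
\[
\sup_{0\le t\le T}|z_k(t)|\le 2|\beta_k|\sup_{0\le t\le T}|l_k(t)|.
\]
Because $\alpha>1$, each symmetric stable process $l_k$ is a martingale, so for $1<p<\alpha$ Doob's maximal inequality combined with the self-similarity $l_k(T)\stackrel{d}{=}T^{1/\alpha}l_k(1)$ produces $\E\sup_{t\le T}|l_k(t)|^p\le C_p T^{p/\alpha}$; the case $p\in(0,1]$ follows from this by Jensen's inequality with some auxiliary $p_1\in(1,\alpha)$.

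Next, since $p<\alpha<2$, I would invoke the subadditivity $(\sum_k a_k)^{p/2}\le \sum_k a_k^{p/2}$ for $a_k\ge 0$ to get
\[
\|A^\theta Z_t\|_H^p=\Bigl(\sum_k\gamma_k^{2\theta}|z_k(t)|^2\Bigr)^{p/2}\le \sum_k\gamma_k^{p\theta}|z_k(t)|^p,
\]
then take the supremum in $t$ inside the sum, pass to expectation, and substitute the two one-dimensional bounds along with $|\beta_k|\le C_2\gamma_k^{-\beta}$, obtaining
\[
\E\sup_{t\le T}\|A^\theta Z_t\|_H^p\le C\,T^{p/\alpha}\sum_k\gamma_k^{p(\theta-\beta)}.
\]
Since $\gamma_k\sim |k|^2$, the Fourier series converges precisely when $p(\beta-\theta)>1/2$, and the hypothesis $\theta<\beta-1/(2\alpha)$ makes this true for every $p$ in the non-empty window $\bigl(\tfrac{1}{2(\beta-\theta)},\alpha\bigr)$.

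Finally, to cover all $p\in(0,\alpha)$ I would fix any $p_0$ in that window and use Jensen once more, $(\E X^p)^{1/p}\le(\E X^{p_0})^{1/p_0}$ with $X=\sup_{t\le T}\|A^\theta Z_t\|_H$, to extend the bound to the full range. The main obstacle I foresee is not any single calculation but the coordination of the parameter ranges: the nearer $\theta$ is to the threshold $\beta-1/(2\alpha)$, the closer to $\alpha$ one must take $p$ for the Fourier sum to converge, while small $p$ simultaneously requires a workaround for Doob's inequality on $l_k$. Both difficulties are absorbed by the Jensen interpolation step.
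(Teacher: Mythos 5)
The paper offers no proof of this lemma---it is imported verbatim from \cite{Xu13}---so there is nothing internal to compare against, but your argument is correct and coincides with the standard proof of the cited result: the pathwise integration by parts reducing $\sup_{t\le T}|z_k(t)|$ to $2|\beta_k|\sup_{t\le T}|l_k(t)|$, the one-dimensional maximal bound via Doob's inequality and self-similarity (with Jensen for $p\le 1$), termwise summation using $p/2<1$, and a final interpolation. The parameter bookkeeping is also right: $\theta<\beta-\tfrac{1}{2\alpha}$ is exactly what makes the window $\bigl(\tfrac{1}{2(\beta-\theta)},\alpha\bigr)$ non-empty, and every smaller $p\in(0,\alpha)$ is recovered by the closing Jensen step.
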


 The following lemma  is concerned with  the support of  the  distribution of $\big(\{Z_t\}_{0 \le t \le T},Z_T\big)$.
\begin{lem} \label{l:SupZt}
For  any $T>0, 0< p<\infty$, the random variable $\left(\{Z_t\}_{0 \le t \le T},Z_T\right)$ has a full support in $L^p([0,T];V)\times V$. More precisely, for any $\phi\in L^p([0,T];V), a\in V, \e>0$,
\Bes
\PP\left(\int_0^T \|Z_t-\phi_t\|_V^p \dif t+\|Z_T-a\|_V<\e\right)>0.
\Ees
\end{lem}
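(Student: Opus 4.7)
The plan is to truncate $Z$ to finitely many Fourier modes, control the high-frequency tail via the maximal inequality of Lemma~\ref{l:ZEst}, and then solve the resulting finite-dimensional control problem mode-by-mode using the independence of the component noises and the full topological support of a one-dimensional symmetric $\alpha$-stable process. Concretely, write $Z = Z^K + R^K$ with $Z^K := \proj_K Z$ the projection onto $\{e_k : |k|\le K\}$, and fix $\theta \in (1/2,\,\beta - 1/(2\alpha))$, which is non-empty by hypothesis. Since $\gamma_{|k|}\ge\gamma_K$ for $|k|>K$,
\begin{equation*}
\|R^K_t\|_V^2 = \sum_{|k|>K}\gamma_k|z_k(t)|^2 \le \gamma_K^{1-2\theta}\|A^{\theta}Z_t\|_H^2,
\end{equation*}
so Lemma~\ref{l:ZEst} gives $\E\sup_{0\le t\le T}\|R^K_t\|_V^q \le C\gamma_K^{q(1/2-\theta)}T^{q/\alpha}\to 0$ as $K\to\infty$ for any $q\in(0,\alpha)$; in particular $\sup_{t\le T}\|R^K_t\|_V$ is arbitrarily small with probability close to $1$. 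Since $\proj_K\phi\to\phi$ in $L^p([0,T];V)$ and $\proj_K a\to a$ in $V$, after approximating $\proj_K \phi$ in $L^p$ by a continuous path $\tilde\phi:[0,T]\to \proj_K H$ with $\tilde\phi(0)=0$ and $\tilde\phi(T)=\proj_K a$, the whole problem reduces to showing that for fixed large $K$ and every $\delta>0$,
\begin{equation*}
\PP\Bigl(\sup_{0\le t\le T}\|Z^K_t - \tilde\phi_t\|_V < \delta\Bigr) > 0,
\end{equation*}
because this sup-bound simultaneously controls both the $L^p([0,T];V)$ norm and the terminal $V$-norm.

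Next I would reduce to one dimension. Mode-by-mode, $z_k$ satisfies $dz_k = -\gamma_k z_k\,dt + \beta_k\,dl_k$ with $z_k(0)=0$, and the $l_k$'s are mutually independent; by independence it suffices to prove that for each $|k|\le K$, any continuous target $\psi_k:[0,T]\to\CC$ with $\psi_k(0)=0$, and any $\rho>0$, one has $\PP(\sup_{t\le T}|z_k(t)-\psi_k(t)|<\rho) > 0$. Integration by parts yields $\beta_k l_k(t) = z_k(t)+\gamma_k\int_0^t z_k(s)\,ds$, which shows that the map $l_k\mapsto z_k$ is continuous in the sup norm and identifies the deterministic control producing $z_k = \psi_k$ as the continuous path $\tilde l_k(t) := \beta_k^{-1}\bigl(\psi_k(t)+\gamma_k\int_0^t\psi_k(s)\,ds\bigr)$ with $\tilde l_k(0)=0$. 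Hence everything reduces to the following sup-norm support statement: for any continuous $\tilde l_k$ with $\tilde l_k(0)=0$ and any $\rho'>0$, $\PP(\sup_{t\le T}|l_k(t)-\tilde l_k(t)|<\rho') > 0$.

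I expect this uniform-support statement to be the technical heart of the proof, since $l_k$ is pure-jump and cannot be uniformly close to any continuous function unless its large jumps on $[0,T]$ are excluded. The standard remedy is the L\'evy--It\^o decomposition $l_k = l_k^{\rho'/2} + \xi^{\rho'/2}$, with $\xi^{\rho'/2}$ the compound Poisson of jumps of size $\ge \rho'/2$ and $l_k^{\rho'/2}$ the independent pure-jump martingale with jumps of size $<\rho'/2$: the event $\{\xi^{\rho'/2}\text{ has no jump on }[0,T]\}$ has positive probability, and a Priola--Zabczyk-type argument (see \cite{PZ11}) shows that on an independent positive-probability event the small-jump part $l_k^{\rho'/2}$ is uniformly within $\rho'/2$ of $\tilde l_k$. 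Once this ingredient is in place, the remainder — combining finitely many independent modes by independence, the tail bound on $R^K$, and the two Fourier approximations — is routine.
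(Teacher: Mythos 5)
Your proposal follows essentially the same route as the paper: split $Z$ into a finite-dimensional part and a high-frequency tail, kill the tail using the maximal inequality of Lemma \ref{l:ZEst} together with the spectral-gap factor $\gamma_K^{1/2-\theta}$ for $\theta\in(1/2,\beta-\tfrac{1}{2\alpha})$, and handle the finite-dimensional part by independence of the modes and the Priola--Zabczyk support argument for the one-dimensional stable Ornstein--Uhlenbeck components. The only difference is cosmetic: the paper simply cites Section 4.2 of \cite{PZ11} for the finite-dimensional factor, whereas you unwind that argument explicitly (continuity of the map $l_k\mapsto z_k$ and the sup-norm support of a one-dimensional symmetric $\alpha$-stable process via the small-jump/large-jump decomposition), which is correct.
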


\begin{proof}
%
First,  by Lemma \ref{l:ZEst}, we have $Z \in L^\infty([0,T];V)$, a.s.
For  any $N \in \N$, let $H_N$ be the Hilbert space spanned by $\{e_k\}_{1 \le k \le N}$, and let
$\pi_N: H \rightarrow H_N$ be the orthogonal projection.  Notice that $\pi_N$ is also an orthogonal projection in $V$.
Define
$$\pi^N=I-\pi_N, \ \ \ \ H^N=\pi^N H.$$
By  the independence of $\pi_N Z$ and $\pi^N Z$, for any $\phi_t\in L^p([0,T];V), a\in V$, we have
\begin{align*}
&  \PP\left(\int_0^T \|Z_t-\phi_t\|_V^p \dif t+\|Z_T-a\|_V<\e\right) \\
\ge &\PP\bigg(\int_0^T \|\pi_N(Z_t-\phi_t)\|_V^p \dif t+\|\pi_N(Z_T-a)\|_V<\frac{\e}{2^{p+1}}, \\
& \ \ \ \ \int_0^T \|\pi^N(Z_t-\phi_t)\|_V^p \dif t+\|\pi^N(Z_T-a)\|_V<\frac{\e}{2^{p+1}}\bigg) \\
=&\PP\bigg(\int_0^T \|\pi_N(Z_t-\phi_t)\|_V^p \dif t+\|\pi_N(Z_T-a)\|_V<\frac{\e}{2^{p+1}}\bigg) \\
&   \times \PP\left(\int_0^T \|\pi^N(Z_t-\phi_t)\|_V^p \dif t+\|\pi^N(Z_T-a)\|_V<\frac{\e}{2^{p+1}}\right). \\
\end{align*}
By the same argument as in the Section 4.2 of \cite{PZ11}, we obtain
$$\PP\left(\int_0^T \|\pi_N(Z_t-\phi_t)\|_V^p \dif t+\|\pi_N(Z_T-a)\|_V<\frac{\e}{2^{p+1}}\right)>0.$$
To finish the proof, it suffices to show
$$
\PP\left(\int_0^T \|\pi^N(Z_t-\phi_t)\|_V^p \dif t+\|\pi^N(Z_T-a)\|_V<\frac{\e}{2^{p+1}}\right)>0.
$$
 For any $\theta \in (\frac 12, \beta-\frac{1}{2 \alpha})$,
by Lemma \ref{l:ZEst} (with $p=1$ therein), the spectral gap inequality and Chebyshev inequality, we have for any $\eta$
\begin{align*}
\PP\left(\sup_{0 \le t \le T} \|\pi^N Z_t\|_V \le {\eta}\right) &=1-\PP\left(\sup_{0 \le t \le T} \|\pi^N Z_t\|_V>{\eta}\right) \notag \\
& \ge 1-\PP\left(\sup_{0 \le t \le T} \|\pi^N A^{\theta} Z_t\|_H>{\eta} \gamma^{\theta-\frac 12}_N\right) \notag \\
& \ge 1-\PP\left(\sup_{0 \le t \le T} \|A^{\theta} Z_t\|_H>{\eta} \gamma^{\theta-\frac 12}_N\right)  \notag \\
& \ge 1-C_{\alpha, \beta, T}{\eta}^{-1} \gamma_N^{\frac12-\theta},\notag
\end{align*}
where $C_{\alpha, \beta, T}$ depends on ${\alpha, \beta, T}$. By the previous inequality, as long as $N$ (depending on $\e,p, \phi$) is sufficiently large,  we have
$$
\PP\left(\sup_{0 \le t \le T} \|\pi^N Z_t\|_V \le \frac{\e}{2^{2p+2}}\right)>0,
$$
and
$$
 \int_0^T\|\pi^N\phi_t\|_V^p \dif t+\|\pi^Na\|_V<\frac{\e}{2^{2p+2}}.
$$
Hence,
\Bes
\begin{split}
&\PP\left(\int_0^T \|\pi^N(Z_t-\phi_t)\|_V^p \dif t+\|\pi^N(Z_T-a)\|_V<\frac{\e}{2^{p+1}}\right)\\
\ge & \PP\left(\int_0^T \|\pi^NZ_t\|_V^p \dif t+\|\pi^NZ_T\|_V<\frac{\e}{2^{2p+2}}, \int_0^T\|\pi^N\phi_t\|_V^p \dif t+\|\pi^Na\|_V<\frac{\e}{2^{2p+2}}\right)\\
=& \PP\left(\int_0^T \|\pi^NZ_t\|_V^p \dif t+\|\pi^NZ_T\|_V<\frac{\e}{2^{2p+2}}\right) \\
>&0.
\end{split}
\Ees
The proof is complete.
\end{proof}
\vskip0.3cm

\subsection{A control problem for the deterministic system}
Consider the deterministic system in $H$,
\begin{equation}\label{e: deterministic}
\partial_t x(t)+Ax(t)=N(x(t))+u(t), \ \ \ x(0)=x_0,
\end{equation}
where $u\in L^2([0,T];V)$. By using the similar argument in the proof of Lemma 4.2 in \cite{Xu13},  for every $x(0)=x_0\in H, u\in L^2([0,T];V)$, the system \eqref{e: deterministic} admits a unique solution $x(\cdot)\in C([0, T];H)\cap C((0, T];V)$.  Moreover, $\{x(t)\}_{t\in[0,T]}$ has the following form:
\begin{equation}\label{e: solu deter}
x(t)=e^{-At} x_0+\int_0^t e^{-A(t-s)} N(x(s))\dif s+\int_0^t e^{-A(t-s)} u(s)\dif s, \ \ \ \forall \ t\in[0,T].
\end{equation}

 Next, we shall prove that the deterministic system is approximately controllable in time $T>0$.
\begin{lem} \label{l:AppCon}
For any $T>0, \e>0, a \in V$, there exists some $u \in L^\infty([0,T];V)$ such that the system \eqref{e: deterministic} satisfies that
\begin{equation*}
\|x(T)-a\|_V < \e.
\end{equation*}
\end{lem}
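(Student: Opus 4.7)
The plan is a two-stage argument: first smooth the initial datum by running the uncontrolled equation for a short time, then steer the smoothed state to the target along an explicit smooth path in $V$ by taking $u$ to be the PDE residual along that path.

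Concretely, I set $T_1 = T/2$ and let $u \equiv 0$ on $[0, T_1]$; by the existing well-posedness theory the resulting uncontrolled solution lies in $V$ at time $T_1$. A bootstrap argument, based on the mild formula \eqref{e: solu deter}, the smoothing estimate \eqref{e:eAEst}, and the nonlinear bounds \eqref{e:NVEst}--\eqref{e: NAH}, then upgrades this to $x(T_1) \in D(A^{3/2})$. This bootstrap is the main technical step, since it lifts the given $V$-regularity by an additional half-derivative so that $A x(T_1) \in V$.

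Since finite trigonometric sums with zero mean are dense in $V$ and lie in $D(A^\sigma)$ for every $\sigma$, I pick $a' \in D(A^{3/2})$ with $\|a' - a\|_V < \e$. On $[T_1, T]$ I define the affine path
$$y(t) = x(T_1) + \frac{t - T_1}{T - T_1}\bigl(a' - x(T_1)\bigr),$$
so that $y(T_1) = x(T_1)$, $y(T) = a'$, and $y(t) \in D(A^{3/2})$ for every $t$. The control is then the PDE residual
$$u(t) := \partial_t y(t) + A y(t) - N(y(t)), \qquad t \in [T_1, T],$$
extended by $0$ on $[0,T_1]$.

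It remains to check $u \in L^\infty([0, T]; V)$ and to conclude: the term $\partial_t y \equiv (a' - x(T_1))/(T - T_1)$ is time-constant and lies in $V$; $A y(t)$ is a convex combination of $A x(T_1)$ and $A a'$, both in $V$; and \eqref{e:NVEst} gives $\|N(y(t))\|_V \le C(\|y(t)\|_V + \|y(t)\|_V^3)$, which is bounded on $[T_1, T]$. By uniqueness of solutions to \eqref{e: deterministic}, the controlled solution coincides with $y$ on $[T_1, T]$, so $x(T) = a'$ and $\|x(T) - a\|_V < \e$. The main obstacle is the regularity bootstrap to $x(T_1) \in D(A^{3/2})$: without it, $A y(t)$ would not be $V$-valued and the residual $u$ would only lie in $L^\infty([0,T]; H)$ rather than in $L^\infty([0,T]; V)$ as the lemma demands.
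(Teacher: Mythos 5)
Your proposal follows essentially the same route as the paper: run the uncontrolled equation to regularize, replace $a$ by a smoother target (the paper uses $e^{-\theta A}a$ where you use a trigonometric-polynomial approximation, an immaterial difference), interpolate linearly on the second half of the interval, and read off $u$ as the PDE residual, with the whole burden falling on the bootstrap $x(T_1)\in D(A^{3/2})$. The paper carries out that bootstrap in exactly the two half-steps you gesture at --- first $\sup_{t\in[t_0/2,t_0]}\|Ax(t)\|_H<\infty$ via the mild formula started at $t_0/3$ together with \eqref{e:eAEst} and \eqref{e:NVEst}, then $\|A^{3/2}x(t_0)\|_H<\infty$ via the mild formula started at $t_0/2$ together with \eqref{e: NAH} --- so your argument is correct and matches the paper's.
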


\begin{proof}
We shall prove the lemma by the following three steps.

\emph{Step 1. Regularization.} For any ${t_0}\in (0, T]$,  let $u(t)=0$ for all $t \in [0,{t_0}]$. Then the system \eqref{e: deterministic} admits a unique solution $x(\cdot)\in C([0, {t_0}];H)\cap C((0, {t_0}];V)$ with the following form:
\begin{equation*}
x(t)=e^{-At} x_0+\int_0^t e^{-A(t-s)} N(x(s))\dif s, \ \ \ \forall 0<t\le {t_0}.
\end{equation*}

\emph{Step 2.  Approximation at time $T$ and Linear interpolation.} For any $a\in V, \e>0$,  there exists a constant $\theta>0$ such that $$
\|e^{-\theta A}a-a\|_V\le \e.
$$Setting $x(t)=\frac{t-{t_0}}{T-{t_0}} e^{-\theta A}a+\frac{T-t}{T-{t_0}} x({t_0})$ for all $t \in [{t_0}, T]$. Then $x(\cdot)\in C((0, T];V)$.  By  \eqref{e: deterministic}, we have
$$u(t)=\frac{e^{-\theta A}a-x({t_0})}{T-{t_0}}-Ax(t)-N(x(t)),\ \ \ \  \forall t\in[{t_0}, T].$$

\emph{Step 3. } It remains to show that  $u\in L^{\infty}([0,T]; V)$. By \eqref{e:eAEst}, \eqref{e:NVEst} and
the constructions of $\{x(t)\}_{t\in[0,T]}$ and $\{u(t)\}_{t\in[0,T]}$ above, it is sufficient to show that $Ax({t_0})\in V$.
 For any $t\in [{t_0}/2,{t_0}]$,
\begin{align*}
x(t)=&e^{-(t-{t_0}/3)A}x({t_0}/3)+\int_{{t_0}/3}^t e^{-(t-s)A}N(x(s))\dif s \\
=&e^{-(t-{t_0}/2)A}x({t_0}/2)+\int_{{t_0}/2}^t e^{-(t-s)A}N(x(s))\dif s.
\end{align*}
 By \eqref{e:eAEst}, we have for $t>\frac{{t_0}}{2}$,
\begin{align} \label{e: Ax}
\|Ax(t)\|_H= & \|Ae^{-(t-{t_0}/3)A}x({t_0}/3)\|_H +\left\|A\int_{{t_0}/3}^t e^{-(t-s)A}N(x(s))\dif s\right\|_H\notag \\
\le & \|Ae^{-(t-{t_0}/3)A}x({t_0}/3)\|_H+\int_{{t_0}/3}^t \|A^{1/2}e^{-(t-s)A}\|\cdot\|A^{1/2}N(x(s))\|_H\dif s\notag \\
\le & C_1(t-{t_0}/3)^{-1}\|x({t_0}/3)\|_H+\int_{{t_0}/3}^t C_{1/2}{(t-s)^{-1/2}}\|N(x(s))\|_V\dif s.
\end{align}
Since $x(\cdot)\in C((0,{t_0}]; V)$, $\sup_{t\in[{t_0}/3,{t_0}]}\|x(s)\|_V<\infty$. Together with \eqref{e:NVEst} and \eqref{e: Ax}, we obtain that
$$
\sup_{t\in [{t_0}/2,{t_0}]}\|Ax(t)\|_H<\infty.
$$
By the previous inequality and  \eqref{e: NAH}, we have
\Bes
\begin{split}
\|A^{3/2}x({t_0})\|_H= & \left\|A^{3/2}e^{({t_0}/2)A}x({t_0}/2) +A^{3/2}\int_{{t_0}/2}^{{t_0}} e^{-(t-s)A}N(x(s))\dif s\right\|_H\\
\le & \|A^{3/2}e^{-({t_0}/2)A}x({t_0}/2)\|_H+\int_{{t_0}/2}^{{t_0}} \|A^{1/2}e^{-({t_0}-s)A}\|\cdot\|AN(x(s))\|_H\dif s\\
\le & C_{3/2}({t_0}/2)^{-3/2}\|x({t_0}/2)\|_H+\int_{{t_0}/2}^{{t_0}} C_{1/2}{({t_0}-s)^{-1/2}}\|AN(x(s))\|_H\dif s\\
\le & C_{3/2}({t_0}/2)^{-3/2}\|x({t_0}/2)\|_H+C\sup_{s\in[{t_0}/2,{t_0}]}(1+\|x(s)\|_V^2)\cdot(1+ \|Ax(s)\|_H^2)\\
<&\infty,
\end{split}
\Ees
which means that $Ax({t_0})\in V$. The proof is complete.
\end{proof}

\vskip0.3cm
\subsection{Irreducibility   in H}

 Now we  prove Theorem \ref{thm irred}  by following the idea  in \cite[Theorem 5.4]{PZ11}.

\begin{proof}[Proof of Theorem \ref{thm irred}] For any $x_0\in H, t>0$,  we have $X_t^{x_0}\in V$ a.s. by Theorem \ref{Thm Xu 13}.  Since $X$ is Markov in $H$,  for any $a\in H, T>0,\e>0$,
\begin{align*}
\PP \left(\|X_T^{x_0}-a\|_H<\e\right)=&\int_V \PP \left(\|X_T^{x_0}-a\|_H<\e| X_t^{x_0}=v\right)\PP(X_t^{x_0}\in \dif v)\\
=&\int_V \PP \left(\|X_{T-t}^{v}-a\|_H<\e\right)\PP(X_t^{x_0}\in \dif v).
\end{align*}
 To prove that
\begin{equation*}\label{e: ir}
\PP \left(\|X_T^{x_0}-a\|_H<\e\right)>0,
\end{equation*}
 it is sufficient to prove that for any $T>0$,
$$
\PP \left(\|X_T^{x_0}-a\|_H<\e\right)>0 \ \ \ \ \text{for all } x_0\in V.
$$
   Next, we prove the theorem under the assumption of the initial value $x_0\in V$ in  the following two steps.
\vskip0.3cm
\emph{Step 1}.  For any $a\in H,\e>0$, there exists some $\theta>0$ such that $e^{-\theta A}a\in V$ and
\begin{equation}\label{e: irre 1}
\|a-e^{-\theta A}a\|_H\le \frac \e 4.
\end{equation}
For any $T>0$, by Lemma \ref{l:AppCon} and the spectral gap inequality, there exists some
$u \in L^\infty([0,T];V)$ such that the system
\begin{equation*}
\dot x+Ax=N(x)+u, \ \ \ x(0)=x_0,
\end{equation*}
satisfies that
\begin{equation}\label{e: irre 2}
\|x(T)-e^{-\theta A}a\|_H \le\|x(T)-e^{-\theta A}a\|_V< \frac\e 4.
\end{equation}
Putting  \eqref{e: irre 1} and \eqref{e: irre 2} together, we have
\begin{equation}\label{e: irre 3}
\|x(T)-a\|_H< \frac\e 2.
\end{equation}

\vskip0.3cm

\emph{Step 2:}
We shall consider the systems \eqref{e:Lin} and \eqref{e:LinS}
as follows:
 \begin{equation}\label{e:Lin}
    \begin{cases}
 \dot z+A z=u, \ \ \ \ z(0)=0, \\
\dot y+A y=N(y+z), \ \ \ \ y(0)=x_0\in V,
\end{cases} \end{equation}
and
 \begin{equation}\label{e:LinS}
    \begin{cases}
\dif Z_t+A Z_t \dif t= \dif L_t, \ \ \ \ Z_0=0;\\
\dif Y_t+A Y_t\dif t=N(Y_t+Z_t)\dif t, \ \ \ \ Y_0=x_0\in V.
 \end{cases} \end{equation}
By the arguments in the proof of Lemma 4.2 in \cite{Xu13}, for any $x_0\in V, u\in L^2([0,T];V)$, the systems \eqref{e:Lin} and \eqref{e:LinS} admit the unique solutions $(y(\cdot), z(\cdot)) \in  C([0,T];V)^2$ and $(Y_{\cdot}, Z_{\cdot}) \in   C([0,T];V)^2$, a.s.
 Furthermore, denote
$$
x(t)=y(t)+z(t),\ \ \ X_t=Y_t+Z_t,\ \ \ \ \forall t\ge0.
$$
For any $0 \le t\le T$,
\begin{align*}
 &  \|Y_t-y(t)\|_H^2 +2\int_{0}^t \|Y_s-y(s)\|^2_V \dif s \\
=&2 \int_{0}^t \Ll Y_s-y(s), N(X_s)-N(x(s))\Rr_H \dif s \\
=& 2\int_{0}^t \|Y_s-y(s)\|^2_H \dif s+2 \int_0^t \Ll Y_s-y(s),Z_s-z(s)\Rr_H \dif s \\
& -2 \int_{0}^t \left\Ll Y_s-y(s), X^3_s-x^3(s)\right\Rr_H \dif s.
 \end{align*}
Let us estimate the third term of the right hand side. Denoting $\Delta Y_s=Y_s-y(s)$ and
$\Delta Z_s=Z_s-z(s)$, we have
\begin{equation*}
\begin{split}
 &\int_{0}^t \left\Ll Y_s-y(s), X^3_s-x^3(s)\right\Rr_H \dif s\\
=&  \int_{0}^t \left\Ll \Delta Y_s, [\Delta Y_s+\Delta Z_s+x(s)]^3-x^3(s)\right\Rr_H \dif s\\
=&\int_{0}^t \langle \Delta Y_s,[\Delta Y_s+\Delta Z_s]^3+3[\Delta Y_s+\Delta Z_s]^2 x(s)+3[\Delta Y_s+\Delta Z_s] x^2(s) \rangle_H \dif s\\
=& \int_{0}^t \langle \Delta Y_s, (\Delta Y_s)^3  +3 (\Delta Y_s)^2 \Delta Z_s +3 \Delta Y_s (\Delta Z_s)^2 +(\Delta Z_s)^3\rangle_H \dif s \\
&+3\int_{0}^t \left\langle \Delta Y_s,[(\Delta Y_s)^2+2 \Delta Y_s \Delta Z_s+ (\Delta Z_s)^2] x(s)\right\rangle_H\dif s +3\int_{0}^t \langle \Delta Y_s,[\Delta Y_s+\Delta Z_s] x^2(s)\rangle _H\dif s.
\end{split}
\end{equation*}
Since $\frac 34  (\Delta Y_s)^4+3 (\Delta Y_s)^3 x(s)+3(\Delta Y_s)^2x^2(s) \ge 0$, from the above relation we have
\begin{align*}
 &\int_{0}^t \left\Ll Y_s-y(s), X^3_s-x^3(s)\right\Rr_H \dif s \\
\ge& \int_{0}^t \langle \Delta Y_s,3 (\Delta Y_s)^2 \Delta Z_s+3 \Delta Y_s (\Delta Z_s)^2 +(\Delta Z_s)^3\rangle_H \dif s\\
&+3\int_{0}^t \langle \Delta Y_s,[2 \Delta Y_s \Delta Z_s+(\Delta Z_s)^2] x(s) \Rr_H \dif s\\
&+3\int_{0}^t \langle \Delta Y_s, \Delta Z_s x^2(s)\rangle_H \dif s+\frac14 \int_{0}^t \|\Delta Y_s\|_{L^4}^4\dif s.
\end{align*}
Using the following Young inequalities: for all $y,z\in L^4(\T; \R)$,
\begin{equation*}
\begin{split}
& |\Ll  y, z \Rr_H|=\left|\int_{\mathbb T} y(\xi) z (\xi) \dif \xi\right| \le \frac{\int_{\mathbb T} y^4(\xi) \dif \xi}{80}+ C\int_{\mathbb T} z^{\frac 43}(\xi) \dif \xi, \\
& |\Ll  y^2, z \Rr_H|=\left|\int_{\mathbb T} y^2(\xi) z (\xi) \dif \xi\right| \le \frac{\int_{\mathbb T} y^4(\xi) \dif \xi}{80}+C\int_{\mathbb T} z^2(\xi) \dif \xi. \\
& |\Ll  y^3, z  \Rr_H|=\left|\int_{\mathbb T} y^3(\xi) z(\xi) \dif \xi\right| \le \frac{ \int_{\mathbb T} y^4(\xi) \dif \xi}{80}+ C\int_{\mathbb T} z^4(\xi) \dif \xi,
\end{split}
\end{equation*}
and the H\"older inequality, we further get
\begin{align*}
 &\int_{0}^t \left\Ll Y_s-y(s), X^3_s-x^3(s)\right\Rr_H \dif s \\
\ge&  \frac{1}{80}\int_{0}^t \|\Delta Y_s\|_{L^4}^4\dif s-7C\int_{0}^t \|\Delta Z_s\|_{L^4}^4 \dif s\\
&-6C\int_{0}^t\|\Delta Z_s x(s)\|_{L^2}^2\dif s-3C\int_{0}^t\|(\Delta Z_s)^2 x(s)\|_{L^{\frac43}}^{\frac43}\dif s\\
&-3C\int_{0}^t\|\Delta Z_s x^2(s)\|_{L^{\frac43}}^{\frac43}\dif s\\
\ge&  \frac{1}{80}\int_{0}^t \|\Delta Y_s\|_{L^4}^4\dif s-7C\int_{0}^t \|\Delta Z_s\|_{L^4}^4 \dif s\\
&-6C\int_{0}^t\|\Delta Z_s\|_{L^4}^{2}\|x(s)\|_{L^4}^{2}\dif s-3C\int_{0}^t\|\Delta Z_s\|_{L^4}^{\frac83}\|x(s)\|_{L^4}^{\frac43}\dif s\\
&-3C\int_{0}^t\|\Delta Z_s\|_{L^4}^{\frac43}\|x(s)\|_{L^4}^{\frac83}\dif s.
\end{align*}
 Since $x(t)=y(t)+z(t)\in   C([0, T]; V)$, by \eqref{e:L4}, there exists a constant $C_T$ such that
 $$
\sup_{s\in [0, T]} \|y(s)+z(s)\|_{L^4}\le  \sup_{s\in [0, T]}\|y(s)+z(s)\|_{H}^{\frac12}\cdot\|y(s)+z(s)\|_{V}^{\frac12}\le C_T.
 $$
Consequently,  there is some constant $C_T>0$ satisfying that
\begin{equation*}
\begin{split}
& \|Y_t-y(t)\|_H^2+2\int_{0}^t \|Y_s-y(s)\|^2_V \dif s \\
\le& 3\int_{0}^t \|Y_s-y(s)\|^2_H \dif s+\int_{0}^t \|Z_s-z(s)\|^2_H \dif s\\
& +C_T\int_{0}^t\Big( \|Z_s-z(s)\|_{L^4}^4+\|Z_s-z(s)\|_{L^4}^{2} +\|Z_s-z(s)\|_{L^4}^{\frac83} +\|Z_s-z(s)\|_{L^4}^{\frac43} ds\Big) \dif s.
 \end{split}
\end{equation*}
Therefore, by the spectral gap inequality and Gronwall's inequality, we have
\begin{equation}\label{e: Gron}
\begin{split}
\|Y_T-y(T)\|_H^2 \le  C_T \sum_{i\in \Lambda} \int_{0}^T \|Z_s-z(s)\|_{V}^{i }  \dif s,
\end{split}
\end{equation}
where $\Lambda:=\{  4/3, 2, 8/3,4\}$. This inequality, together with Lemma \ref{l:SupZt}, \eqref{e: irre 3}, implies
\begin{equation*} 
\begin{split}
&\PP \left(\|X_T-a\|_H<\e\right)\\
=&\PP \left(\|Y_T-y(T)+Z_T-z(T)+x(T)-a\|_H<\e\right) \\
\ge&\PP \left(\|Y_T-y(T)\|_H \le \e/4, \|Z_T-z(T)\|_H \le  \e/4, \|x(T)-a\|_H<\e/2\right) \\
=&\PP \left(\|Y_T-y(T)\|_H \le \e/4, \|Z_T-z(T)\|_H \le  \e/4\right)\\
\ge& \PP \left(   \sum_{i\in \Lambda} \int_{0}^T \|Z_s-z(s)\|_{V}^{i }  \dif s+\|Z_T-z(T)\|_V \le C_{T, \e}\right)\\
>&0.
\end{split}
\end{equation*}
The proof is complete.
\end{proof}

\section{The proofs of Theorems \ref{thm main 1} and   \ref{thm main 2}}
\subsection{Several general results for strong Feller Markov processes}

In this subsection, we recall some general results about moderate deviations and exponential convergence for general strong Feller Markov processes, borrowed from \cite{Wu01}.
 \vskip0.3cm
We say that a measurable function $f:H\rightarrow \R$ belongs to the extended domain $\mathbf{D}_e(\LLL)$ of the generator $\LLL$ of $(P_t)$,
if there is a measurable function $g:H\rightarrow\R$ so that $\int_0^t|g|(X_s)ds<+\infty, \forall t>0, \PP_x-a.s.$ and
$$
f(X_t)-f(X_0)-\int_0^t g(X_s)ds, \ \ \ t\ge0
$$
is a c\`{a}dl\`{a}g $\PP_x$-local martingale for all $x\in H$. In that case, $g:=\LLL f$.

\begin{thm}\label{thm DWT}$($\cite[Theorem 5.2c]{DWT} or \cite[Theorem 2.4]{Wu01}$)$ Assume that the process $(X_t)$ is strong Feller, irreducible and aperiodic $($see \cite{DWT} for definition, that is the case if $P_T(\cdot, K)>0$ over $H$ for some compact $K$ verifying $\pi(K)>0$$)$.  If there are some continuous function $1\le \Psi\in \mathbf{D}_e(\mcl L)$,   compact subset $K\subset H$ and constants $\varepsilon, C>0$ such that
\begin{align}\label{Lyapunov}
 -\frac{\LLL \Psi}{\Psi}\ge\e \1_{K^c}-C\1_{K},
\end{align}
then there is a unique invariant probability measure $\pi$ satisfying
 \begin{align*}
\int \Psi \dif \pi<+\infty,
 \end{align*}
and there are some constants $\theta>0$ and $0<\rho<1$ such that for all $t\ge0$,
\begin{equation}\label{eq: exponential}
\sup_{|f|\le \Psi}\left|P_tf(z)-\int f\dif \pi \right|\le \theta \Psi(z)\cdot \rho^t\ \ \  z\in H.
\end{equation}
\end{thm}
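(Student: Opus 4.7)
The plan is to follow the continuous-time Meyn--Tweedie--Down framework for geometric ergodicity, of which this theorem is the standard packaging. First I would rewrite the Lyapunov hypothesis \eqref{Lyapunov} as a classical drift inequality. Since $\Psi$ is continuous and $K$ is compact, $b:=(C+\varepsilon)\sup_K \Psi<\infty$, so \eqref{Lyapunov} reads $\LLL\Psi\le -\varepsilon\Psi+b\1_K$. Applying Dynkin's formula to $e^{\varepsilon t}\Psi(X_t)$ (justified by a localization along a sequence of stopping times, using $\Psi\in\mathbf D_e(\LLL)$) produces simultaneously the mean bound $\E_x\Psi(X_t)\le e^{-\varepsilon t}\Psi(x)+b/\varepsilon$ and, by optional stopping at $\tau_K:=\inf\{t\ge 0: X_t\in K\}$, the exponential-return-time bound $\E_x\exp(\varepsilon\tau_K)\le \Psi(x)/\inf_K\Psi+\text{const}$. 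Taking $t\to\infty$ in the mean bound and using standard tightness arguments produces an invariant probability measure $\pi$ with $\int\Psi\,\dif\pi\le b/\varepsilon<\infty$.

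Second, I would establish the small-set property of $K$: there exist $T_0>0$, $\delta>0$ and a probability measure $\nu$ on $H$ such that
\begin{equation*}
P_{T_0}(x,\cdot)\ge \delta\,\nu(\cdot),\qquad \forall\, x\in K.
\end{equation*}
This is where the three topological hypotheses enter essentially. Strong Feller implies $x\mapsto P_t(x,O)$ is continuous for every open $O\subset H$; irreducibility gives $P_t(x,O)>0$; and the aperiodicity hypothesis $P_T(\cdot,K)>0$ lets one fix a single time $T_0$ that works uniformly. Compactness of $K$ then upgrades pointwise positivity to a uniform minorization $P_{T_0}(x,O_0)\ge\delta$ on some ``reachable'' open set, and one extracts $\nu$ by a restriction argument.

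Third, I would combine the drift condition with the minorization via a Nummelin splitting / regeneration argument (or equivalently a coupling argument on the $T_0$-skeleton) to obtain \eqref{eq: exponential}. On the split chain, each visit to $K$ yields an independent $\delta$-Bernoulli regeneration trial; the exponential tails of $\tau_K$ from the drift condition combine with the geometric number of trials needed for a successful regeneration to produce a regeneration time with exponential tails. A Kendall-type renewal argument then converts this into geometric convergence of $P_t(x,\cdot)$ to $\pi$ in the $\Psi$-weighted total-variation norm, with the initial-state prefactor $\theta\Psi(x)$ coming from the exponential moment of $\tau_K$ starting from $x$.

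I expect the main obstacle to be the small-set step: translating strong Feller, irreducibility and the aperiodicity condition $P_T(\cdot,K)>0$ into a uniform minorization on $K$ requires a careful topological argument (essentially the continuous-time analogue of ``compact $\Rightarrow$ petite'' for T-chains), and the choice of $T_0$ and $\nu$ must be coordinated with the regeneration construction that follows. Once this minorization is in hand, the remaining drift-plus-minorization machinery is by now routine, and gives the claimed spectral-gap-type estimate \eqref{eq: exponential} with explicit $\theta$ and $\rho$ depending only on $\varepsilon$, $b$, $\delta$, $T_0$ and $\sup_K\Psi$.
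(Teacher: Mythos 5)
This theorem is not proved in the paper at all: it is imported verbatim from \cite[Theorem 5.2c]{DWT} and \cite[Theorem 2.4]{Wu01}, so there is no in-paper argument to compare your proposal against. That said, your outline is a faithful reconstruction of the standard proof of the cited result: the reduction of \eqref{Lyapunov} to the drift inequality $\LLL\Psi\le-\varepsilon\Psi+b\1_K$ with $b=(C+\varepsilon)\sup_K\Psi$, the use of strong Feller, irreducibility and the aperiodicity hypothesis to make the compact set $K$ small (the continuous-time analogue of ``compact sets are petite for irreducible T-chains''), and the Nummelin splitting plus Kendall renewal estimate are exactly the architecture of Down--Meyn--Tweedie. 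One caveat worth flagging: obtaining the existence of $\pi$ from the mean bound $\E_x\Psi(X_t)\le e^{-\varepsilon t}\Psi(x)+b/\varepsilon$ by ``standard tightness arguments'' does not work in the setting this theorem is applied to, because the Lyapunov function actually used, $\Psi(x)=(M+\|x\|_H^2)^{1/2}$, has sublevel sets that are bounded but \emph{not} compact in the infinite-dimensional space $H$. The correct route, and the one taken in the references, is that the exponential return-time bound $\E_x e^{\varepsilon\tau_K}<\infty$ to the petite compact set $K$ makes the process positive Harris recurrent, which gives existence and uniqueness of $\pi$; the moment bound $\int\Psi\,\dif\pi\le b/\varepsilon$ then follows by integrating the drift inequality against $\pi$. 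With that repair your sketch is the standard argument.
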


\vskip0.3cm

For the measure-valued process $\mathfrak M_t$ defined in \eqref{eq: M measure}, we have the following large deviations result.
\begin{thm}\label{thm Wu}$($\cite[Theorem 2.6]{Wu01}$)$ Assume that the process $(X_t)$ is strong Feller, irreducible, aperiodic and satisfies \eqref{Lyapunov}. For any initial measure $\mu$ verifying $\mu(\Psi)<+\infty$, the measure  $\PP_{\mu}(\mathfrak M_t\in\cdot)$ satisfies the large deviation principle w.r.t. the $\tau$-topology with  the speed $b^2(t)$ and  the rate function
 \begin{equation}
 I(\nu):=\sup\left\{\int f\dif \nu-\frac12\sigma^2(f);f\in b\mathcal B (H) \right\}, \ \  \ \forall \nu\in \mathcal M_b(H)
 \end{equation}
where
\begin{equation}
\sigma^2(f)=\lim_{t\rightarrow \infty}\frac1t\E_{\pi}\left(\int_0^t (f(X_s)-\pi(f))\dif s \right)^2
\end{equation}
exists in $\RR$ for every $f\in B_{\Psi}\supset b\mathcal B(H)$.

\end{thm}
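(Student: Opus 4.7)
The plan is to obtain the MDP via the abstract Gärtner--Ellis theorem adapted to the $\tau$-topology, whose central analytic input is a quadratic limit for the normalized log-moment generating functional at the moderate scale. Setting $\lambda_t := b(t)/\sqrt{t}\to 0$, I would show, for each $f\in b\mathcal{B}(H)$, that
\[
\Lambda(f):=\lim_{t\to\infty}\frac{1}{b^2(t)}\log \mathbb{E}_\mu\exp\!\left(\lambda_t\int_0^t[f(X_s)-\pi(f)]\,\dif s\right)=\tfrac{1}{2}\sigma^2(f),
\]
so that the stated rate function $I$ is precisely the Legendre transform of $\Lambda$ restricted to $b\mathcal{B}(H)$. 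The Gaussian character of $\Lambda$ is exactly what produces a rate function of the form displayed in the theorem and ensures compact sublevel sets.

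The core step is a perturbation analysis of the Feynman--Kac semigroup $P_t^{\lambda f}g(x):=\mathbb{E}_x\bigl[g(X_t)\exp\bigl(\lambda\int_0^t f(X_s)\,\dif s\bigr)\bigr]$. By Theorem \ref{thm DWT}, the Lyapunov condition \eqref{Lyapunov} yields an exponential spectral gap for $P_t$ on $B_\Psi$; Kato-type analytic perturbation theory then gives, for small $\lambda$, a simple isolated principal eigenvalue $\Lambda(\lambda f)$ with expansion
\[
\Lambda(\lambda f)=\lambda\pi(f)+\tfrac{\lambda^2}{2}\sigma^2(f)+O(\lambda^3),
\]
where $\sigma^2(f)$ is identified via the Poisson equation $-\LLL g=f-\pi(f)$ as the CLT asymptotic variance. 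Feynman--Kac gives $\log \mathbb{E}_x\exp(\lambda\int_0^t f(X_s)\dif s)=t\Lambda(\lambda f)+O(1)$ uniformly on $\Psi$-bounded sets, so substituting $\lambda=\lambda_t$, centring by $\pi(f)$ and dividing by $b^2(t)$ produces $\tfrac12\sigma^2(f)+o(1)$, using the identities $t\lambda_t^2=b^2(t)$ and $t\lambda_t^3=b^2(t)\lambda_t\to 0$.

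To upgrade this Gärtner--Ellis limit into an LDP with respect to the $\tau$-topology I would invoke Baldi's theorem on the dual pair $(\mathcal{M}_b(H),b\mathcal{B}(H))$. The remaining ingredient is exponential tightness of $\{\mathbb{P}_\mu(\mathfrak M_t\in\cdot)\}_t$ at speed $b^2(t)$, which is extracted from the Lyapunov inequality itself: applying Dynkin's formula to $\Psi$ and exploiting $-\LLL\Psi/\Psi\ge\varepsilon\mathbf{1}_{K^c}-C\mathbf{1}_K$ yields Donsker--Varadhan-type bounds of the form $\mathbb{E}_\mu\exp\bigl(\delta\int_0^t\mathbf{1}_{K^c}(X_s)\,\dif s\bigr)\le C_\mu e^{Ct}$, and $\mu(\Psi)<+\infty$ absorbs the initial distribution. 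Combined with the convexity and lower semicontinuity of $\Lambda$ (which are automatic from its quadratic form), this delivers both the upper and lower bounds in $(\mathcal M_b(H),\tau)$.

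The main obstacle will be the perturbation argument inside the weighted space $B_\Psi$: one must verify that the spectral gap of $P_t$ persists for $\lambda f$ with $f$ bounded but possibly $\Psi$-unbounded, and, crucially, that the second derivative of $\lambda\mapsto\Lambda(\lambda f)$ at zero coincides with the CLT asymptotic variance $\sigma^2(f)$ rather than some larger quantity. This identification rests on unique solvability of the Poisson equation modulo constants in $B_\Psi$, which in turn flows from the resolvent estimates guaranteed by the spectral gap of Theorem \ref{thm DWT}; checking that the resulting representation of $\sigma^2(f)$ matches the displayed time-averaged variance is the technical heart of the argument.
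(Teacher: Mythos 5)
The paper does not prove this statement at all: Theorem \ref{thm Wu} is quoted verbatim from \cite[Theorem 2.6]{Wu01} and used as a black box, the authors' own contribution being the verification of its hypotheses (strong Feller, irreducibility, aperiodicity, and the Lyapunov bound \eqref{Lyapunov}) for the Ginzburg--Landau system. So there is no internal proof to compare yours against; your proposal is an attempt to reprove Wu's theorem from scratch.

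Taken on its own terms, the scalar half of your sketch is sound: for fixed $f\in b\mathcal B(H)$, the Feynman--Kac semigroup $P_t^{\lambda f}$ is a bounded perturbation of $P_t$ on $B_\Psi$, the spectral gap from Theorem \ref{thm DWT} persists for small $\lambda$, and the Kato expansion together with the Poisson equation identifies $\Lambda''(0)$ with the displayed asymptotic variance $\sigma^2(f)$. The genuine gap is the passage to the level-2 statement in $(\mathcal M_b(H),\tau)$. First, the exponential tightness you need is at speed $b^2(t)$ in the non-metrizable $\tau$-topology, and the bound you propose, $\mathbb E_\mu\exp\bigl(\delta\int_0^t \mathbf 1_{K^c}(X_s)\,\dif s\bigr)\le C_\mu e^{Ct}$, lives at speed $t$ (useless since $b^2(t)=o(t)$) and in any case controls occupation times rather than $\tau$-compact containment of $\mathfrak M_t$, whose total variation grows like $\sqrt t/b(t)\to\infty$. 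Second, Baldi's theorem only gives the lower bound at exposed points of the quadratic rate function, and in the infinite-dimensional dual pair $(\mathcal M_b(H),b\mathcal B(H))$ establishing that exposed points suffice is precisely the hard part, not a formality. Wu's actual proof circumvents both obstacles through his theory of uniformly integrable Feynman--Kac operators and hyper-exponential recurrence, reducing to finite-dimensional marginals by a projective-limit argument; without supplying substitutes for these two ingredients your outline does not close, and the honest alternative is the one the paper takes, namely to cite \cite{Wu01}.
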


\subsection{The Proofs of main results}
In this subsection, we shall prove Theorems \ref{thm main 1} and  \ref{thm main 2} based on the above theorems. The main technique is to construct a suitable Lyapunov test function.

\begin{proof}[Proofs of  Theorems \ref{thm main 1} and   \ref{thm main 2}]  By Theorems \ref{Thm Xu 13} and \ref{thm irred}, the system \eqref{e:XEqn} is  strong Feller, irreducible and aperiodic in $H$. Indeed, as the invariant  measure $\pi$  is supported on $V$, there exists a bounded  closed ball $F\subset V$ satisfying $\pi(F)>0$. Notice that $F$ is compact in $H$.
Since the system $X$ is strong Feller and  irreducible in $H$,  by \cite[Theorem 4.2.1]{DPZ96}, the invariant measure $\pi$ is equivalent to all measures $P_t(x,\cdot)$, for all $x\in H, t>0$.  Consequently, $P_t(x, F)>0$ for all $x\in H, t>0$, which implies that the system is aperiodic.


By Theorems \ref{thm DWT} and   \ref{thm Wu}, we now construct a suitable Lyapunov function $\Psi$ satisfying \eqref{Lyapunov}.
Take
\begin{align}\label{e Lyap}
\Psi(x):=(M+\|x\|_H^2)^{\frac12},
\end{align}
where $M$ is a large constant to be determined later. By Lemma \ref{l:DeL} below,  we have $\Psi \in \mathbf{D}_e(\mcl L)$.

Recall $x^m=\pi_m x$, we have
\begin{align}\label{eq: I}
\LLL\Psi(x^m)=&\langle -Ax^m, D\Psi(x^m)\rangle+\langle N(x^m),D\Psi(x^m)\rangle\notag \\
 &+\sum_{|i| \le m}\int_{|y_i|\le 1}\left[\Psi(x^m+\beta_iy_i e_i)-\Psi(x^m)-\beta_iy_i D_{e_i}\Psi(x^m)\right]\nu(\dif y_i)\notag \\
 &+\sum_{|i| \le m}\int_{|y_i|> 1}\left[\Psi(x^m+\beta_iy_i e_i)-\Psi(x^m) \right]\nu(\dif y_i)\notag \\
 =:&J^m_1+J^m_2+J^m_3+J^m_4.
\end{align}
Here
$$
  \ D_{e_i}\Psi(x^m):=\frac{x_i}{\Psi(x^m)},\ \ \ \ D\Psi(x^m):=\sum_{|i| \le m} (D_{e_i}\Psi(x^m))e_i=\frac{x^m}{\Psi(x^m)}.
$$

For the first term,  using the  integration  by parts formula, we have
\begin{align}\label{eq: I1}
\langle -Ax^m, D\Psi(x^m)\rangle=\langle -Ax^m, \frac{x^m}{\Psi(x^m)} \rangle=-\frac{\|x^m\|_V^2}{\Psi(x^m)}.
\end{align}

For the second   term, by  (2.5) in \cite{Xu13} (note that $N(x)$ here equals $-N(x)$ in \cite{Xu13}), we have
\begin{align}\label{eq: I2}
\langle N(x^m), D\Psi(x^m) \rangle=\langle N(x^m), \frac{x^m}{\Psi(x^m)} \rangle\le \frac1{4\Psi(x^m)}.
\end{align}

\noindent For any $h\in H$,
\begin{align*}
|\langle h, D^2\Psi(x^m) h\rangle|=\frac{\|h^m\|_H^2}{\sqrt{M+\|x^m\|^2}}-\frac{|\langle h^m, x^m\rangle |^2}{(M+\|x^m\|_H^2)^{\frac32}}\le\frac{\|h\|_H^2}{\sqrt{M+\|x^m\|_H^2}}.
\end{align*}
This inequality, together with Taylor's formula, implies that
\begin{align*}
|\Psi(x^m+\beta_iy_i e_i)-\Psi(x^m)-\beta_iy_i D_{e_i}\Psi(x^m)|\le \frac{\beta_i^2 y_i^2}{\sqrt{M+\|x^m\|_H^2}}.
\end{align*}
Thus, for the third term, we have
\begin{align}\label{eq: I3}
 &\sum_{|i| \le m}\int_{|y_i|\le 1}\left|\Psi(x^m+\beta_iy_i e_i)-\Psi(x^m)-\beta_iy_i D_{e_i}\Psi(x^m)\right|\nu(\dif y_i)\notag \\
 \le & \sum_{|i| \le m}\int_{|y_i|\le 1}\frac{\beta_i^2 y_i^2}{\sqrt{M+\|x^m\|_H^2}}  \nu(\dif y_i)\notag \\
 =& \frac{\sum_{|i| \le m}\beta_i^2}{\sqrt{M+\|x^m\|_H^2}} \int_{|y|\le 1}\frac{ |y|^{1-\alpha}}{C_\alpha} \dif y\notag \\
=& \frac{2\sum_{|i| \le m}\beta_i^2}{C_\alpha(2-\alpha)\sqrt{M+\|x^m\|_H^2}}<\frac{2\sum_{i \in {\Z}_*}\beta_i^2}{C_\alpha(2-\alpha)\sqrt{M+\|x^m\|^2_H}}<+\infty,
\end{align}
where $\nu$ is the L\'evy measure of 1-dimensional $\alpha$-stable process and we have used the assumption of $\beta_i$ in (ii) in the last inequality.

By Taylor's formula again, there exists $\tl x^m\in H$ satisfying that
\begin{align*}
|\Psi(x^m+\beta_i y_i e_i)-\Psi(x^m)|=|\langle  D_{e_i}\Psi(\tl x^m), \beta_iy_ie_i\rangle|=\frac{\left|\tl x_i\beta_i y_i\right|}{\sqrt{M+\|\tl x^m\|_H^2}}\le |\beta_i y_i|.
\end{align*}
For the fourth term, we have
\begin{align}\label{eq: I4}
&\left|\sum_{|i| \le m}\int_{|y_i|> 1}\left[\Psi(x^m+\beta_iy_i e_i)-\Psi(x^m) \right]\nu(\dif y_i)\right|
\le\sum_{|i| \le m}\int_{|y_i|> 1}|\beta_i y_i| \nu(\dif y_i)\notag \\
=&\sum_{|i| \le m}\int_{|y_i|> 1} \frac{|\beta_i y_i|}{C_{\alpha}|y_i|^{1+\alpha}} \dif y_i
 =\frac{2\sum_{|i| \le m} |\beta_i|}{C_{\alpha}(\alpha-1)}<\frac{2\sum_{i \in {\Z}_*} |\beta_i|}{C_{\alpha}(\alpha-1)}<+\infty,
 \end{align}
where we have used the assumption of $\beta_i$ in (ii) again.

Putting \eqref{eq: I}-\eqref{eq: I4} together,  we obtain that for any $x\in H$,
\begin{align} \label{eq: II}
 -\frac{\LLL\Psi(x^m)}{\Psi(x^m)}
=&-\frac{J^m_1+J^m_2+J^m_3+J^m_4}{\Psi(x^m)} \notag \\
\ge& \frac{\|x^m\|_V^2}{M+\|x^m\|_H^2}-\frac{1}{4(M+\|x^m\|_H^2)}-\frac{2 \sum_{i \in {\Z}_*}\beta_i^2}{C_\alpha(2-\alpha)(M+\|x^m\|_H^2)}\notag\\
&-\frac{2 \sum_{i \in {\Z}_*} |\beta_i|}{C_{\alpha}(\alpha-1)\sqrt{M+\|x^m\|_H^2}}.
\end{align}
Let
\begin{align*}
K:=\{x\in V; \|x\|_V^2\le M\}.
\end{align*}
Then $K$ is compact in $H$.  By (3) in Theorem \ref{Thm Xu 13},  choose $M$ large enough such that   $\pi(K)>0$ and \begin{align}\label{eq: I5}
\frac{1}{4M}+\frac{2 \sum_{i \in {\Z}_*}\beta_i^2 }{C_\alpha(2-\alpha)M}+\frac{2\sum_{i \in {\Z}_*}|\beta_i|}{C_{\alpha}(\alpha-1)\sqrt M} \le \frac14, \ \ \ \forall x\in H.
\end{align}
Since $\lim_{m \rightarrow \infty} \Psi(x^m)=\Psi(x)$ and $\lim_{m \rightarrow \infty} \mcl L \Psi(x^m)$ has limit for $x \in V$, by the closable property of $\mcl L$, we immediately get $\mcl L \Psi(x)=\lim_{m \rightarrow \infty} \mcl L \Psi(x^m)$ and thus
\
\begin{equation}  \label{e:Ver1}
\begin{split}
&-\frac{\LLL\Psi(x)}{\Psi(x)}  \ge -\frac 14, \ \ \ \ \ \ x \in K.
\end{split}
\end{equation}

For any $x\in K^c$, by \eqref{eq: II} and \eqref{eq: I5},  we have
\begin{align*}
-\frac{\LLL\Psi(x^m)}{\Psi(x^m)}\ge  &   \frac{\|x^m\|_V^2}{M+\|x^m\|_H^2}-\frac{1}{4}\ge \frac14.
\end{align*}
This implies
\begin{align} \label{e:Ver2}
-\frac{\LLL\Psi(x)}{\Psi(x)}\ge  &   \frac{\|x^m\|_V^2}{M+\|x^m\|_H^2}-\frac{1}{4}\ge \frac14 \ \ \ \ \ \ \ \forall \ x \in V \cap K^c,
\end{align}
and
\begin{align}   \label{e:Ver3}
-\frac{\LLL\Psi(x)}{\Psi(x)}=\infty \ \ \ \ \ \ \ \forall \ x \in H \setminus (V \cap K^c).
\end{align}

Putting \eqref{e:Ver1}-\eqref{e:Ver3} together, we immediately obtain
\begin{align*}
-\frac{\LLL\Psi(x)}{\Psi(x)}\ge  &  \frac14 \1_{K^c }-\frac14 \1_{K}.
\end{align*}
The proof is complete.
\end{proof}
\vskip0.3cm

\begin{lem}\label{l:DeL} For $\Psi$ defined in \eqref{e Lyap}, we have $\Psi \in  \mathbf{D}_e(\mcl L)$.
\end{lem}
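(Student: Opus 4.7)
The plan is to produce the martingale decomposition of $\Psi(X_t)$ directly via a Galerkin approximation, thereby exhibiting $\mathcal{L}\Psi$ explicitly. For each $m\in\NN$, let $X^{(m)}$ denote the Galerkin solution of \eqref{e:XEqn} in $H_m := \mathrm{span}\{e_k : |k| \le m\}$ driven by $\pi_m L$. Since $\Psi|_{H_m}$ is smooth with bounded first and second derivatives, the classical It\^o formula for c\`adl\`ag semimartingales gives
\begin{equation*}
\Psi(X^{(m)}_t) - \Psi(X^{(m)}_0) = \int_0^t \mathcal{L}^{(m)} \Psi(X^{(m)}_s)\,ds + M^{(m)}_t,
\end{equation*}
where $\mathcal{L}^{(m)}\Psi(x) = J_1^m(x) + J_2^m(x) + J_3^m(x) + J_4^m(x)$ is precisely the expression computed in \eqref{eq: I}, and $M^{(m)}_t$ is a local martingale arising from a compensated Poisson integral.

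Next I would verify the integrability requirement $\int_0^t |\mathcal{L}^{(m)}\Psi(X^{(m)}_s)|\,ds < +\infty$ a.s., uniformly in $m$. Under assumption (ii), the estimates \eqref{eq: I3} and \eqref{eq: I4} bound $|J_3^m| + |J_4^m|$ by a finite constant depending only on $\{\beta_i\}$; \eqref{eq: I2} gives $|J_2^m| \le 1/(4\sqrt{M})$; and by \eqref{eq: I1}, $|J_1^m(X^{(m)}_s)| \le \|X^{(m)}_s\|_V^2/\sqrt{M}$. The a priori bound $\int_0^t \|X^{(m)}_s\|_V^2\,ds < +\infty$ a.s., obtained by testing the truncated equation against $X^{(m)}$ in $V$ and invoking Lemma \ref{l:ZEst} for the $\alpha$-stable stochastic convolution, then delivers the required integrability.

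I would then pass to the limit $m \to \infty$. Galerkin convergence (following \cite{Xu13}) yields $X^{(m)} \to X$ in $C([0,t];H)\cap L^2(0,t;V)$ a.s.\ along a subsequence, so continuity of $\Psi$ handles the left-hand side. The pointwise convergence $\mathcal{L}^{(m)}\Psi(\pi_m x) \to \mathcal{L}\Psi(x)$ for $x \in V$ (precisely the closability argument invoked after \eqref{eq: II}), together with dominated convergence using the uniform bounds above, produces convergence of the drift integral. The local martingales $M^{(m)}_t$ converge uniformly on compacts in probability to a local martingale $M_t$ by standard convergence theorems for stochastic integrals against $\alpha$-stable L\'evy processes. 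The limit identity
\begin{equation*}
\Psi(X_t) - \Psi(x_0) = \int_0^t \mathcal{L}\Psi(X_s)\,ds + M_t
\end{equation*}
then exhibits $\Psi \in \mathbf{D}_e(\mathcal{L})$.

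The main obstacle will be passing to the limit in $J_1^m = -\|\pi_m x\|_V^2/\Psi(\pi_m x)$: this term is unbounded on $H$, and the pointwise convergence $\|\pi_m x\|_V \to \|x\|_V$ holds only on $V$. The situation is rescued by the regularization $X_t \in V$ for all $t>0$ a.s.\ (Theorem \ref{Thm Xu 13}), together with the Galerkin $V$-energy estimate, which furnishes the uniform $L^1_t$ bound needed for dominated convergence. The jump contributions $J_3^m, J_4^m$ cause no complication, since they are bounded uniformly in $x$ and $m$ by the summability $\sum|\beta_i|^2 + \sum|\beta_i| < \infty$ from assumption (ii).
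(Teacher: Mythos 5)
Your proposal is correct and follows essentially the same route as the paper: Galerkin approximation, the finite-dimensional It\^o formula producing the drift terms $J_1^m,\dots,J_4^m$ of \eqref{eq: I}, uniform bounds on the jump contributions from the summability of $\{\beta_i\}$, and passage to the limit using the a.s.\ $V$-regularity of $X_t$ and an integrated $V$-energy bound to control the unbounded term $J_1^m$. The paper is merely more explicit about the martingale part, splitting it into a compensated small-jump $L^2$ martingale and a large-jump $L^1$ martingale and proving Cauchy convergence of each, whereas you invoke general convergence of stochastic integrals; both arguments land in the same place.
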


Before proving $\Psi \in  \mathbf{D}_e(\mcl L)$,
let us first briefly review some well known facts about $\alpha$-stable process for using It\^{o} formula. Let  $\{l_j(t)\}_{j \ge 1}$ be a sequence of
i.i.d. 1-dimensional $\alpha$-stable processes. The Poisson random measure associated with $l_j(t)$ is defined by
$$
N^{(j)}(t,\Gamma):=\sum_{s\in(0,t]}1_{\Gamma}(l_j(s)-l_j(s-)), \ \ \ \  \forall \ t>0 \ \ \forall \ \Gamma\in \mcl B(\R\setminus\{0\}).
$$
By L\'evy-It\^o's decomposition (cf. \cite[p.126, Theorem 2.4.16]{Ap09}), one has
$$
l_j(t)=\int_{|x|\leq 1}x\tilde N^{(j)}(t,\dif x)+\int_{|x|>1}x N^{(j)}(t,\dif x),
$$
where $\tl N^{(j)}$ is the compensated Poisson random measure defined by
$$
\tilde N^{(j)}(t,\Gamma)=N^{(j)}(t,\Gamma)-t\nu(\Gamma).
$$

\begin{proof}[Proof of Lemma \ref{l:DeL}]
The proof follows from \cite[Sect. 3]{DXZ11} in spirit.
Let $T>0$ be an arbitrary but finite number, we shall consider the stochastic system in $[0,T]$. Consider the Galerkin approximation of \eqref{e:XEqn}:
\Be \label{e:GalEqn}
\dif X^m_t+A X^m_t \dif t=N^m(X^m_t)dt+dL^m_t,
\ \ \ \ X^m_0=x^m,
\Ee
where $X^m_t=\pi_m X_t$, $N^m(X^m_t)=\pi_m[N(X^m_t)]$,
$L^m_t=\sum_{|k| \le m} \beta_k l_k(t) e_k$, $\pi_m$ is the orthogonal projection  defined in the proof of Lemma \ref{l:SupZt}.
By a standard argument (\cite{BLZ14,PrRo07}), for all $x \in W$ with $W=H$ or $W=V$ we have
\
\Be  \label{e:UniXm}
\E \left[\sup_{0 \le t \le T}\|X^{m}_t(x^m)\|_W\right] \le C_W(x,T),
\Ee
\Be  \label{e:UniXmX}
\lim_{m \rightarrow \infty} \E \left[\sup_{0 \le t \le T}\|X^{m}_t(x^m)-X_t(x)\|_W\right]=0,
\Ee
where $C_W(x,T)>0$ is finite.

Write
$\Psi(u):=(M+\|u\|_H^2)^{1/2}$ for all $u \in H$,
it follows from It\^{o} formula (\cite{Ap09}) that
\
\Be  \label{e:ItoM}
\Psi(X^m_t)-\Psi(x^m)+I^m_1(t)-I^m_2(t)=I^m_3(t)+I^m_4(t),
\Ee
where
\begin{align*}
& I^m_1(t):=\int^t_0\frac{\|X^m_s\|^2_V}{(M+\|X^m_s\|^2_H )^{\frac 12}} \dif s-\int^t_0\frac{\langle X^m_s, N(X^m_s)\rangle_H}{(M+\|X^m_s\|^2_H)^{\frac 12}} \dif s, \\
& I^m_2(t):=\sum_{|j| \le m}\int^t_0\!\!\!\int_{\R}\left[\Psi(X^m_s+y\beta_j e_j)-\Psi(X^m_s)
-\frac{\langle X^m_s,y\beta_je_j \rangle}{(M+\|X^m_s\|_H^2)^{\frac 12}} 1_{\{|y| \le 1\}}\right]\nu(\dif y) \dif s,\\
& I^m_3(t):=\sum_{|j| \le m}\int^t_0\!\!\!\int_{|y|\leq 1}[\Psi(X^m_s+y\beta_j e_j)-\Psi(X^m_s)]\tilde N^{(j)}(\dif s,\dif y),\\
& I^m_4(t):=\sum_{|j| \le m}\int^t_0\!\!\!\int_{|y|> 1}\left[\Psi(X^m_s+y\beta_j e_j)-\Psi(X^m_s)\right]\tl N^{(j)}(\dif s,\dif y).
\end{align*}
By a Taylor expansion argument similar to \eqref{eq: I3} and \eqref{eq: I4} below, for all $T>0$ we have
\Bes
\begin{split}
& \E\left[\sup_{0 \le t \le T} |I^m_2(t)|\right] \le CT, \\
& \E\left[\sup_{0 \le t \le T} |I^m_3(t)|^2\right] \le CT, \\
& \E\left[\sup_{0 \le t \le T} |I^m_4(t)|\right] \le CT.
\end{split}
\Ees
Moreover, by a Taylor expansion argument similar to \eqref{eq: I3} and \eqref{eq: I4} below again, we get that as $m_1 \rightarrow \infty, m_2 \rightarrow \infty$,
\Bes
\E \left[\sup_{0 \le t \le T} \left|I^{m_1}_2(t)-I^{m_2}_2(t)\right|\right] \rightarrow 0,
\Ees
\Bes
\E \left[\sup_{0 \le t \le T}\left|I^{m_1}_3(t)-I^{m_2}_3(t)\right|^2\right] \rightarrow 0,
\Ees
\Bes
\E\left[ \sup_{0 \le t \le T} \left|I^{m_1}_4(t)-I^{m_2}_4(t)\right|^p \right]\rightarrow 0,
\Ees
for all $1 \le p<\alpha$. Hence, there exist $I_2$, $I_3$ and
$I_4$ such that
\Be \label{e:I2mCon}
\lim_{m \rightarrow \infty} \E\left[\sup_{0 \le t \le T} \left|I^{m}_2(t)-I_2(t)\right|\right]=0,
\Ee
\Be  \label{e:I3mCon}
\lim_{m \rightarrow \infty} \E\left[ \sup_{0 \le t \le T} |I^{m}_3(t)-I_3(t)|^2\right]=0,
\Ee
\Be  \label{e:I4mCon}
\lim_{m \rightarrow \infty} \E \left[\sup_{0 \le t \le T} |I^{m}_4(t)-I_4(t)|^p\right]=0,
\Ee
where $I_2$, $I_3$ and $I_4$ have the same forms as
$I^m_2$, $I^m_3$ and $I^m_4$ but with $\sum_{|i| \le m}$ replaced by  $\sum_{i \in \mathbb{Z}_*}$ and $X^m$ replaced by $X$.
It is also easy to verify that $I_3$ is an $L^2$ martingale and that $I_4$ is an $L^1$ martingale.

Next we shall show below,  taking a subsequence if necessary, that
\Be  \label{e:I1mCon}
\lim_{m \rightarrow \infty} I^m_1(t)=I_1(t) \ \ \ \forall \ \   0 \le t \le T, \ \ \ \ a.s.,
\Ee
where $I_1(t)$ has the same form as $I^m_1(t)$ but with
$X^m$ replaced by $X$.
Collecting \eqref{e:I2mCon}-\eqref{e:I1mCon}, taking a subsequence if necessary and letting $m \rightarrow \infty$ in \eqref{e:ItoM}, we obtain
\Be
\Psi(X_t)-\Psi(x)+I_1(t)-I_2(t)=I_3(t)+I_4(t).
\Ee
Since $I_3$ and $I_4$ are $L_2$ and $L_1$ martingales respectively, taking
\Bes
\begin{split}
g(t)=&-\int_0^t\frac{\|X_s\|^2_V}{(M+\|X_s\|_H^2)^{\frac 12}} \dif s+\int^t_0\frac{\langle X_s, N(X_s)\rangle_H}{(M+\|X_s\|_H^2 )^{\frac 12}} \dif s \\
&+\sum_{j \in {\Z}_*}\int_0^t\int_{\R}\left[\Psi(X_s+y\beta_j e_j)-\Psi(X_s)
-\frac{\langle X_s,y\beta_je_j \rangle_H}{(M+\|X_s\|_H^2)^{\frac 12}} 1_{\{|y| \le 1\}}\right]\nu(\dif y) \dif s,
\end{split}
\Ees
we immediately verify that $\Psi \in \mathbf{D}_e(\mcl L)$
for $t \in [0,T]$. Since $T>0$ is arbitrary, $\Psi \in \mathbf{D}_e(\mcl L)$
for $t \in [0,\infty)$.

It remains to prove \eqref{e:I1mCon}. Taking a subsequence if necessary and letting $m \rightarrow \infty$ in \eqref{e:ItoM}, by Fatou lemma and the fact $\Ll x,N(x)\Rr \le \frac 14$ from \cite{Xu13} we have
\
\Bes
\begin{split}
& \ \ \ \E\big[\sup_{t\in[0,T]}(M+\|X_t\|_H^2)^{\frac 12}\big]
+\E\left[\int^T_0\frac{\|X_s\|^2_V}{(M+\|X_s\|_H^2)^{\frac 12}}\dif s\right] \leq (M+\|x\|_H^2)^{\frac 12}+CT+CT^{\frac 12}.
\end{split}
\Ees
This implies
\Be  \label{e:IntXsV2}
\int^t_0\frac{\|X_s\|^2_V}{(M+\|X_s\|_H^2)^{\frac 12}}\dif s<\infty \ \ \ \ \ \ \ \ \ \forall \ \ t \in [0,T],  \ a.s.
\Ee
It is easy to check
$\frac{\|X^m_s\|^2_V}{(M+\|X^m_s\|_H^2)^{\frac 12}}$ is increasing in $m$ for every $s>0$ and
$$\frac{\|X^m_s\|^2_V}{(M+\|X^m_s\|_H^2)^{\frac 12}} \le \frac{\|X_s\|^2_V}{(M+\|X_s\|_H^2 )^{\frac 12}} , \ \ \ \ s>0.$$
Hence, by \eqref{e:UniXmX} and the Lesbegue dominated convergence theorem,  taking a subsequence if necessary,  we get
\Be \label{e:Im11Con}
\lim_{m \rightarrow \infty} \int_0^t \frac{\|X^m_s\|^2_V}{(M+\|X^m_s\|_H^2)^{\frac 12}}\dif s=\int_0^t \frac{\|X_s\|^2_V}{(M+\|X_s\|_H^2 )^{\frac 12}} \dif s, \  \ \ \ \ a.s.
\Ee
Furthermore, observe that
\Bes
\begin{split}
\left|\langle x, N(x)\rangle_H\right| & \le \int_{\T} |x(\xi)|^2 d\xi+\int_\T |x(\xi)|^4 d\xi \\
& \le \|x\|_H^2+\|x\|^2_\infty \|x\|_H^2  \le \|x\|_H^2+\tl C \|x\|^2_V \|x\|_H^2,
\end{split}
\Ees
where the last inequality is by Sobolev embedding.
Note that for all $s \in (0,T]$
\Bes
\begin{split}
\left|\frac{\langle X^m_s, N(X^m_s)\rangle_H}{(M+\|X^m_s\|^2_H)^{\frac 12}} \right| & \le
\frac{\left(\sup_{0 \le t \le T} \|X^m_t\|^2_H\right)(1+\tl C\|X^m_s\|^2_V)}{(M+\|X^m_s\|_H^2)^{\frac 12}}  \\
& \le  \frac{\left(\sup_{0 \le t \le T} \|X^m_t\|^2_H\right)(1+\tl C\|X_s\|^2_V)}{(M+\|X_s\|_H^2 )^{\frac 12}}, \ \ \ \ \ a.s.
\end{split}
\Ees
Hence,  taking a subsequence if necessary, by \eqref{e:UniXm},\eqref{e:UniXmX} and \eqref{e:IntXsV2} with the Lesbegue dominated convergence theorem,  we obtain
\Be \label{e:Im12Con}
\lim_{m \rightarrow \infty} \int^t_0\frac{\langle X^m_s, N(X^m_s)\rangle_H}{(M+\|X^m_s\|^2_H)^{\frac 12}} \dif s=\int^t_0\frac{\langle X_s, N(X_s)\rangle_H}{(M+\|X^m_s\|^2_H)^{\frac 12}} \dif s, \ \ \ a.s.
\Ee
Combining \eqref{e:Im11Con} and \eqref{e:Im12Con}, we immediately get the desired  equation \eqref{e:I1mCon}.
\end{proof}
\vskip0.3cm

\noindent{\bf Acknowledgments}: The authors are grateful to the anonymous referees for constructive comments
and corrections.  R. Wang thanks the  Faculty of Science and Technology, University of Macau, for finance support and hospitality.    He was supported by Natural Science Foundation of China 11301498, 11431014 and the Fundamental Research Funds for the Central Universities WK0010000048. J. Xiong was  supported by Macao Science and Technology Fund FDCT 076/2012/A3 and Multi-Year Research Grants of the University of Macau Nos. MYRG2014-00015-FST and MYRG2014-00034-FST. L. Xu is supported by the grants: SRG2013-00064-FST, MYRG2015-00021-FST and Science and Technology Development Fund, Macao S.A.R FDCT 049/2014/A1. All of the three authors are supported by the research project RDAO/RTO/0386-SF/2014.

\bibliographystyle{amsplain}

\end{document}